\newtheorem{theorem}{Theorem}[section]
\newtheorem{proposition}[theorem]{Proposition}
\theoremstyle{definition}
\newtheorem{example}[theorem]{Example}
\theoremstyle{remark}
\newtheorem{remark}[theorem]{Remark}
\numberwithin{equation}{section}
\begin{document}
\title[Kronecker Product Graphs]{Kronecker Product Graphs and \\
Counting Walks in Restricted Lattices}

%    Only \author and \address are required; other information is
%    optional.  Remove any unused author tags.

%    author one information
% \author[short version for running head]{name for top of paper}

\author{Hun Hee Lee}
\address{Hun Hee Lee : Department of Mathematical Sciences, Seoul National University,
San56-1 Shinrim-dong Kwanak-gu, Seoul 151-747, Republic of Korea}
\email{hunheelee@snu.ac.kr}
\thanks{}

%    author two information
\author{Nobuaki Obata}
\address{Nobuaki Obata: Graduate School of Information Sciences, Tohoku University, Sendai, 980-8579 Japan}
\email{obata@math.is.tohoku.ac.jp}
\thanks{This work is supported by the Japan-Korea Basic
Scientific Cooperation Program ``Non-commutative Stochastic Analysis:
New Prospects of Quantum White Noise and Quantum Walk" (2015-2016)
and partially by JSPS Grant-in-Aid for Exploratory Research
No.~26610018.}

%    \subjclass is required.
\subjclass[2010]{Primary 05C50; Secondary 05C38, 05C76}

\date{}
\dedicatory{}

%    "Communicated by" -- provide editor's name; required.
\commby{}

%    Abstract is required.
\begin{abstract}
Formulas are derived for counting walks in the Kronecker product of graphs,
and the associated spectral distributions are obtained by 
the Mellin convolution of probability distributions.
Two-dimensional restricted lattices admitting the Kronecker product structure
are listed, and their spectral distributions are calculated
in terms of elliptic integrals.
\end{abstract}

\maketitle

%    Text of article.

%    Bibliographies can be prepared with BibTeX using amsplain,
%    amsalpha, or (for "historical" overviews) natbib style.
\bibliographystyle{amsplain}
%    Insert the bibliography data here.

%%%%%%%%%%%%%%%%%%%%%%%%%%%%%%%%%%%%%%%%%%%%%%%%%%%%%%%%%%%%%%%%%%%%%%%%
\section{Introduction}

Counting walks in a graph is a basic and interesting problem.
Let $G=(V,E)$ be a locally finite graph with adjacency matrix $A$.
Then the matrix element $(A^m)_{xy}$ counts the number of $m$-step walks 
connecting $x$ and $y$.
For $x=y=o\in V$ this number is expressible in the integral:
\[
(A^m)_{oo}=\int_{\mathbb{R}} x^m \mu(dx),
\qquad m=0,1,2,\dots,
\]
where $\mu$ is a probability distribution on $\mathbb{R}=(-\infty,+\infty)$,
called the \textit{spectral distribution} of $A$ at a vertex $o$.
Thus the number of walks may be studied from 
an analytic or probabilistic point of view.
During the last fifteen years the quantum probability has been employed
for the asymptotic analysis of graph spectra 
as well as the study of product structures in connection with several
notions of independence,
see e.g., a monograph \cite{Hora-Obata}.

This paper focuses on 
the notion of \textit{Kronecker product} of graphs $G_1\times_K G_2$,
which is also called the \textit{direct product} 
and is one of the most basic graph products,
see the comprehensive monographs 
\cite{Hammack-Imrich-Klavzar2011}, \cite{Imrich-Klavzar2000}.
It is well known that the spectral distribution of
the Cartesian product of two graphs $G_1\times_C G_2$
is obtained by the usual convolution of probability distributions
defined by
\[
\int_{\mathbb{R}}h(x)\mu_1*\mu_2(dx)
=\int_{\mathbb{R}}\int_{\mathbb{R}}h(x+y)\mu_1(dx)\mu_2(dy),
\qquad h\in C_{\mathrm{bdd}}(\mathbb{R}).
\]
The convolution $\mu_1*\mu_2$ is known to be the distribution of 
the sum of two independent random variables $X_1+X_2$. 
Quantum probability allows us to discuss variations of 
independence of non-commutative variables.
The comb product of graphs is related to
the monotone convolution,
the star product to the Boolean convolution,
and the free product to the free convolution,
see e.g., \cite{Hora-Obata}, for further relevant results see
\cite{Accardi-Lenczewski-Salapata2007}.

The \textit{Mellin convolution}
in the original sense is the convolution product 
on the locally compact abelian group $(\mathbb{R}_{>0},\cdot)$ 
with the Haar measure $dx/x$ defined by
\begin{equation}\label{eq-Mellin-Convolution}
f\star g(x)
=\int_0^\infty f(y)g\Big(\frac{x}{y}\Big) \frac{dy}{y}
=\int_0^\infty f\Big(\frac{x}{y}\Big)g(y) \frac{dy}{y}
\end{equation}
for $f,g \in L^1((0,\infty),dx/x)$,
see e.g., \cite{Marichev}.
Extending the above definition naturally to 
symmetric probability distributions on $\mathbb{R}$,
we define the \textit{Mellin convolution} $\mu_1*_M\mu_2$ 
of two symmetric distributions $\mu_1$ and $\mu_2$ by 
\[
\int_{\mathbb{R}}h(x)\mu_1*_M\mu_2(dx)
=\int_{\mathbb{R}}\int_{\mathbb{R}}h(xy)\mu_1(dx)\mu_2(dy),
\qquad h\in C_{\mathrm{bdd}}(\mathbb{R}).
\]
Recall that a measure $\mu$ on $\mathbb{R}$ is called symmetric 
if  $\mu(-dx)=\mu(dx)$.
The Kronecker product of graphs becomes 
a new member of the corresponding list of ``product structures" 
of graphs and ``convolution products" 
of probability distributions on $\mathbb{R}$.

This paper is organized as follows.
In Section \ref{Sec:Counting walks in a graph} we assemble
basic notations and notions 
for counting walks in terms of the spectral distribution.
In Section \ref{Sec:Kronecker product of graphs} we introduce 
the concept of Kronecker product of graphs and 
show some elementary properties with illustrations.
The main result is stated in Theorem \ref{main theorem}.
In Section \ref{Sec:Subgraphs of 2-dimensional lattice as Kronecker products}
two-dimensional integer lattices restricted to certain domains
which admit the Kronecker product structure.
We derive formulas for counting walks and show that
the density functions of the spectral distributions
are expressible in terms of elliptic integrals.
Finally in Section \ref{sec:Examples in higher dimension} we
discuss towards higher dimensional extension,
where we find unexpectedly that
the restricted integer lattice $\{x\ge y\ge z\}$
and the mixed product $(\mathbb{Z}_+\times_K \mathbb{Z}_+)\times_C \mathbb{Z}_+$
are not isomorphic but have a common spectral distribution
at the origin $(0,0,0)$.

\section{Counting walks in a graph}
\label{Sec:Counting walks in a graph}

A graph $G=(V,E)$ is a pair,
where $V$ is a non-empty set and $E$ a subset of two-point subsets of $V$,
i.e., $E\subset\{\{x,y\}\,;\, x,y\in V, x\neq y\}$.
We deal with both finite and infinite graphs.
If $\{x,y\}\in E$, we say that $x$ and $y$ are adjacent and
write $x\sim y$.
The degree of $x\in V$ is defined to be the number of
vertices that are adjacent to $x$, and is denoted by $\deg x=\deg_G x$.
A graph under consideration in this paper is always assumed to be
locally finite, i.e., $\deg x<\infty$ for all vertices $x\in V$.

For $m=1,2,\dots$ an $m$-step walk from a vertex $x\in V$ to another $y\in V$ 
is an (ordered) sequence of vertices $x_0,x_1,\dots,x_m$ such that
\[
x=x_0\sim x_1\sim x_2\sim \dotsb\sim x_{m-1}\sim x_m=y.
\]
The number of such walks is interesting to study.
The adjacency matrix of a graph $G=(V,E)$ is a matrix
$A$ indexed by $V\times V$ whose entries are defined by
\[
(A)_{xy}=
\begin{cases}
1, &\text{if $x\sim y$}; \\
0, &\text{otherwise}.
\end{cases}
\]
By local finiteness the powers of $A$ are well-defined and
the matrix entry $(A^m)_{xy}$ counts the number of
$m$-step walks connecting $x$ and $y$.
It is convenient to introduce the Hilbert space $\ell^2(V)$ of 
$\mathbb{C}$-valued square-summable functions on $V$ with the inner product
\[
\langle f, g\rangle=\sum_{x\in V}\overline{f(x)}\,g(x),
\qquad
f,g\in \ell^2(V).
\]
Let $\{\delta_x\,;\,x\in V\}$ be the canonical orthonormal basis of
$\ell^2(V)$.
Then we have
\[
(A^m)_{xy}=\langle \delta_x,A^m\delta_y\rangle,
\quad m=0,1,2,\dots.
\]
We are particularly interested in counting the number of walks
from a vertex $o\in V$ to itself, which is denoted by 
\[
W_m(o;G)=(A^m)_{oo}\,,
\qquad m=0,1,2,\dots.
\]
We tacitly understand that $W_0(o;G)=1$.

\begin{theorem}\label{2thm:spectral distribution}
Let $G=(V,E)$ be a graph with a distinguished vertex $o\in V$.
Then there exists a probability distribution $\mu$ on $\mathbb{R}$ such that
\[
W_m(o;G)
=(A^m)_{oo}
=\langle \delta_o,A^m\delta_o\rangle
=M_m(\mu),
\qquad m=0,1,2,\dots,
\]
where 
\[
M_m(\mu)=\int_{\mathbb{R}} x^m \mu(dx)
\]
is the $m$-th moment of $\mu$.
\end{theorem}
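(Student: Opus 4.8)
The plan is to sidestep any analytic difficulty coming from the fact that $A$ may be an unbounded operator on $\ell^2(V)$, and to argue purely through the Hamburger moment problem. First I would observe that, by local finiteness, the vector $A^m\delta_o$ is well defined and finitely supported for every $m\ge0$, so the numbers
\[
M_m=\langle \delta_o, A^m\delta_o\rangle=(A^m)_{oo},\qquad m=0,1,2,\dots,
\]
make sense; since $A$ has real symmetric entries, each power $A^m$ is again real symmetric and hence every $M_m$ is a real number, with $M_0=1$.

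Next I would check that the real sequence $(M_m)_{m\ge0}$ is positive semidefinite in the Hankel sense. The key identity is
\[
\langle A^j\delta_o,\,A^k\delta_o\rangle=\langle \delta_o,\,A^{j+k}\delta_o\rangle=M_{j+k},
\]
which holds because $A$ is symmetric on the dense subspace of finitely supported functions and all sums involved are finite. Consequently, for any $n\ge0$ and any scalars $c_0,\dots,c_n\in\mathbb{C}$,
\[
\sum_{j,k=0}^{n}\overline{c_j}\,c_k\,M_{j+k}
=\Big\langle \sum_{j=0}^n c_j A^j\delta_o,\ \sum_{k=0}^n c_k A^k\delta_o\Big\rangle
=\Big\|\sum_{j=0}^n c_j A^j\delta_o\Big\|^2\ge0,
\]
so each Hankel matrix $[M_{j+k}]_{0\le j,k\le n}$ is positive semidefinite.

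Then I would invoke the classical solution of the Hamburger moment problem: a real sequence with $M_0=1$ all of whose Hankel matrices are positive semidefinite is the moment sequence of some probability measure $\mu$ on $\mathbb{R}$, i.e.\ $M_m=\int_{\mathbb{R}}x^m\,\mu(dx)$ for all $m$. Combined with the definition $W_m(o;G)=(A^m)_{oo}=\langle\delta_o,A^m\delta_o\rangle=M_m$, this yields exactly the asserted formula $W_m(o;G)=M_m(\mu)$.

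The only real obstacle here is conceptual rather than computational: one must resist applying the spectral theorem to $A$ directly, since for an infinite locally finite graph $A$ need not be bounded and its minimal closed extension need not be self-adjoint — this is also why the statement (correctly) asserts only existence, not uniqueness, of $\mu$. When $\deg$ is bounded, $A$ is a bounded self-adjoint operator and one may alternatively take $\mu(\,\cdot\,)=\langle\delta_o,E_A(\,\cdot\,)\delta_o\rangle$ for the spectral measure $E_A$ of $A$, which recovers the same $\mu$; but the moment-problem argument above handles the general locally finite case claimed in the theorem.
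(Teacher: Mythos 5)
Your proof is correct and follows exactly the route the paper intends: the paper disposes of this theorem in one line, ``The proof is by the Hamburger theorem,'' and your argument simply supplies the standard details (finite support of $A^m\delta_o$ by local finiteness, the identity $\langle A^j\delta_o,A^k\delta_o\rangle=M_{j+k}$, Hankel positive semidefiniteness, and the Hamburger existence theorem). Your closing remark about why one should not reach for the spectral theorem in the unbounded case, and why only existence (not uniqueness) is claimed, is consistent with the paper's own caveat following the theorem.
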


The proof is by the Hamburger theorem, see e.g., \cite{Hora-Obata}.
The probability distribution in Theorem \ref{2thm:spectral distribution}
is called the \textit{spectral distribution} of $A$ in the vector state 
at $o\in V$.
The spectral distribution is not uniquely determined in general
due to the indeterminate moment problem,
however, it is unique if the degrees of vertices are uniformly bounded,
i.e., if $\sup\{\deg (x)\,;\,x\in V\}<\infty$.
If $W_{2m+1}(o;G)=(A^{2m+1})_{oo}=0$ for all $m=0,1,2,\dots$,
the spectral distribution may be assumed to be symmetric.

%%%%%%%%%%%%%%%%%%%%%%%%%%%%%%%%%%%%%%%%%%%%%%%%%%%%%%%%%%%%%%%%%%%%%%%%
\section{Kronecker product of graphs}
\label{Sec:Kronecker product of graphs}

\subsection{Definition and elementary properties}

Let $G_1=(V_1,E_1)$ and $G_2=(V_2,E_2)$ be two 
(finite or infinite) graphs with adjacency matrices $A^{(1)}$ and $A^{(2)}$,
respectively.
Let $V=V_1\times V_2$ be the Cartesian product set and
define a matrix $A$ indexed by $V\times V$ by
\[
(A)_{(x,y),(x^\prime,y^\prime)}
=A^{(1)}_{xx^\prime} A^{(2)}_{yy^\prime},
\qquad
(x,y),(x^\prime,y^\prime)\in V.
\]
Since $A$ is a symmetric matrix whose 
diagonal entries are all zero
and off-diagonal ones take values in $\{0,1\}$,
there exists a graph $G$ on $V=V_1\times V_2$ whose adjacency matrix is $A$,
or equivalently, whose edge set is given by
\[
E=\{\{(x,y),(x^\prime,y^\prime)\}\,;\,
(A)_{(x,y),(x^\prime,y^\prime)}=1\}.
\]
The above graph $G$ is called the \textit{Kronecker product} of $G_1$ and $G_2$,
and is denoted by
\[
G=G_1\times_K G_2\,.
\]
In other words, 
the Kronecker product of $G_1=(V_1,E_1)$ and $G_2=(V_2,E_2)$
is a graph on $V_1\times V_2$ with adjacency relation
$(x,y)\sim_K(x^\prime,y^\prime)\Longleftrightarrow x\sim x^\prime$ 
and $y\sim y^\prime$.

\begin{remark}
The term \textit{Kronecker product} appears in 
\cite{Brouwer-Haemers2010} for instance,
while there are many synonyms.
The \textit{direct product} is another common term 
used in \cite{Godsil1993}, \cite{Hammack-Imrich-Klavzar2011},
\cite{Imrich-Klavzar2000} and so forth.
In this paper we prefer to the former 
in order to avoid confusion with some terms in quantum probability. 
\end{remark}

Through the canonical unitary isomorphism 
$\ell^2(V_1\times V_2)\cong \ell^2(V_1)\otimes \ell^2(V_2)$ 
given by $\delta_{(x,y)}\leftrightarrow \delta_x\otimes\delta_y$,
the adjacency matrix $A$ of $G_1\times_K G_2$ is written as
\begin{equation}\label{3eqn:def od A for Kronecker product}
A=A^{(1)}\otimes A^{(2)}.
\end{equation}
In fact, by definition we have
\[
(A)_{(x,y),(x^\prime,y^\prime)}
=\langle\delta_{(x,y)}, A\delta_{(x^\prime,y^\prime)}\rangle
=\langle\delta_x\otimes \delta_y, A(\delta_{x^\prime}\otimes\delta_{y^\prime})\rangle
\]
and
\[
A^{(1)}_{xx^\prime} A^{(2)}_{yy^\prime}
=\langle\delta_x, A^{(1)}\delta_{x^\prime}\rangle
 \langle\delta_y, A^{(2)}\delta_{y^\prime}\rangle
=\langle\delta_x\otimes \delta_y, 
 (A^{(1)}\otimes A^{(2)})(\delta_{x^\prime}\otimes\delta_{y^\prime})\rangle,
\]
from which \eqref{3eqn:def od A for Kronecker product} follows.

We collect some elementary properties, of which the proofs are 
straightforward.
For further relevant results, see   
the comprehensive monographs \cite{Hammack-Imrich-Klavzar2011},
\cite{Imrich-Klavzar2000}.

\begin{proposition}
For any graphs $G_1,G_2,G_3$ we have
\begin{gather*}
G_1\times_K G_2 \cong G_2\times_K G_1\,,
\\
(G_1\times_K G_2)\times_K G_3
\cong
G_1\times_K (G_2\times_K G_3).
\end{gather*}
\end{proposition}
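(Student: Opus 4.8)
The plan is to prove both isomorphisms by exhibiting explicit bijections of vertex sets and checking that they preserve the adjacency relation in both directions, using the characterization $(x,y)\sim_K(x',y')\Longleftrightarrow x\sim x'$ and $y\sim y'$ given just above. For commutativity, I would take the swap map $\sigma:V_1\times V_2\to V_2\times V_1$, $\sigma(x,y)=(y,x)$. This is visibly a bijection, and the adjacency criterion is literally symmetric in the two coordinates: $(x,y)\sim_K(x',y')$ in $G_1\times_K G_2$ iff $x\sim_{G_1}x'$ and $y\sim_{G_2}y'$ iff $(y,x)\sim_K(y',x')$ in $G_2\times_K G_1$. Hence $\sigma$ is a graph isomorphism.

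For associativity, I would use the natural identification of the triple Cartesian product, $\tau:(V_1\times V_2)\times V_3\to V_1\times(V_2\times V_3)$, $\tau((x,y),z)=(x,(y,z))$, which is a bijection. It remains to verify that $((x,y),z)$ and $((x',y'),z')$ are adjacent in $(G_1\times_K G_2)\times_K G_3$ if and only if $(x,(y,z))$ and $(x',(y',z'))$ are adjacent in $G_1\times_K(G_2\times_K G_3)$. Unwinding the left side with the adjacency criterion twice gives $x\sim x'$, $y\sim y'$, and $z\sim z'$; unwinding the right side likewise gives the same three conditions. So the two adjacency relations coincide under $\tau$.

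Alternatively, and perhaps more cleanly, I would deduce everything from the tensor-product description \eqref{3eqn:def od A for Kronecker product}: the adjacency matrix of $G_1\times_K G_2$ is $A^{(1)}\otimes A^{(2)}$. Commutativity then follows from the fact that the flip unitary $\ell^2(V_1)\otimes\ell^2(V_2)\cong\ell^2(V_2)\otimes\ell^2(V_1)$ conjugates $A^{(1)}\otimes A^{(2)}$ to $A^{(2)}\otimes A^{(1)}$, and since both matrices have $\{0,1\}$ entries and zero diagonal, equality of the conjugated matrices is equality of graphs. Associativity follows the same way from the associativity of the tensor product, $(A^{(1)}\otimes A^{(2)})\otimes A^{(3)}=A^{(1)}\otimes(A^{(2)}\otimes A^{(3)})$ under the canonical identification $\delta_{((x,y),z)}\leftrightarrow\delta_x\otimes\delta_y\otimes\delta_z$.

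There is no real obstacle here; the only thing to be careful about is to check the adjacency equivalence in \emph{both} directions (i.e., that $\tau$ and $\sigma$ are isomorphisms of graphs, not merely homomorphisms), but since the adjacency criterion is stated as an ``if and only if'' this is automatic. I would therefore present the short argument via the vertex bijections, with the tensor-product viewpoint mentioned as a remark, and omit the routine verification in keeping with the statement that ``the proofs are straightforward.''
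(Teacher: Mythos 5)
Your proof is correct and is exactly the routine verification the paper has in mind: the paper omits the argument entirely, stating only that "the proofs are straightforward," and your vertex-bijection argument (with the tensor-product viewpoint as an equivalent reformulation via \eqref{3eqn:def od A for Kronecker product}) is the intended one.
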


\begin{proposition}
Let $G_1=(V_1,E_1)$ and $G_2=(V_2,E_2)$ be two connected graphs
with $|V_1|\ge2$ and $|V_2|\ge2$.
Then the Kronecker product $G_1\times_K G_2$ has at most two
connected components.
\end{proposition}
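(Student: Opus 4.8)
The plan is to reduce connectivity questions in $G_1\times_K G_2$ to the existence of \emph{equal-length} walks in the two factors. Unwinding the definition of the relation $\sim_K$, a sequence $(x_0,y_0)\sim_K(x_1,y_1)\sim_K\dots\sim_K(x_m,y_m)$ is an $m$-step walk in $G_1\times_K G_2$ precisely when $x_0\sim x_1\sim\dots\sim x_m$ is an $m$-step walk in $G_1$ and $y_0\sim y_1\sim\dots\sim y_m$ is an $m$-step walk in $G_2$. Hence $(p,q)$ and $(p',q')$ lie in the same connected component of $G_1\times_K G_2$ if and only if, for some $m$, there is simultaneously an $m$-step walk from $p$ to $p'$ in $G_1$ and an $m$-step walk from $q$ to $q'$ in $G_2$.

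Next I would record a padding observation. Since $G_1$ is connected with $|V_1|\ge2$, no vertex of $G_1$ is isolated, so any walk in $G_1$ can be prolonged by $2$ steps (traverse an edge at its terminal vertex and come back), and the same holds for $G_2$. Therefore, for fixed $p,p'\in V_1$ the set of lengths $m$ admitting an $m$-step walk $p\to p'$ has the form $\{d_1,\,d_1+2,\,d_1+4,\dots\}$ for some $d_1\ge0$, and likewise the lengths of walks $q\to q'$ in $G_2$ form a set $\{d_2,\,d_2+2,\,d_2+4,\dots\}$. Consequently such a pair of walks can be matched to a common length if and only if $d_1\equiv d_2\pmod 2$, in which case any $m\ge\max(d_1,d_2)$ with $m\equiv d_1\pmod 2$ works.

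With these tools the count is immediate. Fix $o_1\in V_1$, $o_2\in V_2$, and a neighbour $o_2'\sim o_2$ (which exists because $G_2$ is connected with at least two vertices). Given an arbitrary vertex $(x,y)$, let $a$ be the length of some walk $o_1\to x$ in $G_1$ and $b$ the length of some walk $o_2\to y$ in $G_2$. If $a\equiv b\pmod 2$, then by the previous paragraph $(x,y)$ is joined to $(o_1,o_2)$ in $G_1\times_K G_2$. If $a\not\equiv b\pmod 2$, then prefixing the step $o_2'\sim o_2$ produces a walk $o_2'\to y$ of length $b+1\equiv a\pmod 2$, so $(x,y)$ is joined to $(o_1,o_2')$. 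In either case $(x,y)$ lies in the component of $(o_1,o_2)$ or of $(o_1,o_2')$; since the components partition the vertex set, $G_1\times_K G_2$ has at most two connected components.

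The only genuinely delicate point is the parity bookkeeping: one must be sure that walks in the two factors can always be synchronised in length, which is exactly what the padding observation guarantees, and that the single parity-fixing extra step (via $o_2'$) is available, which uses nothing beyond connectivity together with $|V_2|\ge2$. As a remark, the two candidate components actually coincide as soon as one of $G_1,G_2$ contains an odd cycle, i.e. is non-bipartite, since then walks of both parities exist between every pair of vertices; but the statement as given needs only the upper bound.
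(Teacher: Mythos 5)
Your argument is correct. The paper itself offers no proof of this proposition (it is listed among the ``elementary properties, of which the proofs are straightforward''), so there is nothing to compare against; what you give is the standard projection-plus-parity argument (essentially the one underlying Weichsel's theorem on connectedness of direct products), and it establishes the bound cleanly. One small imprecision: the set of lengths of walks $p\to p'$ is exactly $\{d_1, d_1+2, d_1+4,\dots\}$ only when the factor is bipartite; in a non-bipartite factor it is strictly larger (eventually containing both parities), so your ``if and only if'' for matching lengths is really only an ``if''. This does not affect the proof, since you only ever use the inclusion $\supseteq$ guaranteed by the two-step padding, together with the single parity-correcting step through $o_2'$.
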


\begin{proposition}
Let $P_1$ be the graph consisting of a single vertex.
Then for any graph $G=(V,E)$
the Kronecker product $P_1\times_K G$ is a graph on $V$ with no edges,
i.e., an empty graph on $V$.
\end{proposition}

The \textit{Cartesian product}
of two graphs $G_1$ and $G_2$,
denoted by $G_1\times_C G_2$, 
is a graph on $V_1\times V_2$ with adjacency matrix defined by
\[
(A)_{(x,y),(x^\prime,y^\prime)}
=A^{(1)}_{xx^\prime}\delta_{yy^\prime}
 +\delta_{xx^\prime}A^{(2)}_{yy^\prime},
\]
or equivalently under the isomorphism
$\ell^2(V_1\times V_2)\cong \ell^2(V_1)\otimes \ell^2(V_2)$,
\[
A=A^{(1)}\otimes I^{(2)}+I^{(1)}\otimes A^{(2)},
\]
where $I^{(i)}$ is the identity matrix indexed by 
$V_i\times V_i$ for $i=1,2$. 

The distance-2 graph of $G_1\times_C G_2$ is a graph on $V_1\times V_2$
with adjacency relation:
\begin{align}
(x,y)\sim(x^\prime,y^\prime)
&\Longleftrightarrow 
\mathrm{dis}_{G_1\times_C G_2}((x,y),(x^\prime,y^\prime))=2 
\label{3eqn:in proof 3.4}\\
&\Longleftrightarrow 
\mathrm{dis}_{G_1}(x,x^\prime)+\mathrm{dis}_{G_2}(y,y^\prime)=2.
\nonumber
\end{align}
It is then easy to see that
the Kronecker product $G_1\times_K G_2$ is a subgraph of
the distance-2 graph of $G_1\times_C G_2$.
However, $G_1\times_K G_2$ is not necessarily
an induced subgraph of the distance-2 graph of $G_1\times_C G_2$.

%%%%%%%%%%%%%%%%%%%%%%%%%%%%%%%%%%%%%%%%%%%%%%%%%%%%%%
\subsection{Counting walks}

The Kronecker product of graphs has a significant property
from the viewpoint of counting walks.

\begin{theorem}\label{thm:number of walks of Kronecker product}
Let $G_1\times_K G_2$ be the Kronecker product of
two graphs $G_1=(V_1,E_1)$ and $G_2=(V_2,E_2)$.
For $o_1\in V_1$ and $o_2\in V_2$ we have
\[
W_m((o_1,o_2);G_1\times_K G_2)=W_m(o_1;G_1)W_m(o_2;G_2),
\qquad m=0,1,2,\dots.
\]
\end{theorem}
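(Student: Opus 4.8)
The plan is to exploit the tensor factorization of the adjacency matrix established in \eqref{3eqn:def od A for Kronecker product}, namely $A = A^{(1)}\otimes A^{(2)}$, together with the multiplicativity of tensor products under matrix multiplication. First I would recall that under the canonical unitary isomorphism $\ell^2(V_1\times V_2)\cong\ell^2(V_1)\otimes\ell^2(V_2)$ the basis vector $\delta_{(o_1,o_2)}$ corresponds to $\delta_{o_1}\otimes\delta_{o_2}$, so that
\[
W_m((o_1,o_2);G_1\times_K G_2)
=\langle\delta_{o_1}\otimes\delta_{o_2},\,A^m(\delta_{o_1}\otimes\delta_{o_2})\rangle.
\]

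The key algebraic step is the identity $(A^{(1)}\otimes A^{(2)})^m = (A^{(1)})^m\otimes (A^{(2)})^m$, which follows by an elementary induction on $m$ from the mixed-product rule $(S_1\otimes S_2)(T_1\otimes T_2)=(S_1T_1)\otimes(S_2T_2)$ for operators on a tensor product Hilbert space; the base case $m=0$ is the identity $I = I^{(1)}\otimes I^{(2)}$. One should note here the (harmless) subtlety that $A^{(1)},A^{(2)}$ are in general unbounded when the graphs are only locally finite, but since we only ever apply finitely many of these operators to the finitely supported vector $\delta_{o_1}\otimes\delta_{o_2}$, all the manipulations take place in the algebraic tensor product of finitely supported functions and no analytic issue arises. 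Then, using that the inner product on the tensor product satisfies $\langle u_1\otimes u_2,\,v_1\otimes v_2\rangle=\langle u_1,v_1\rangle\langle u_2,v_2\rangle$, I would compute
\[
\langle\delta_{o_1}\otimes\delta_{o_2},\,\bigl((A^{(1)})^m\otimes(A^{(2)})^m\bigr)(\delta_{o_1}\otimes\delta_{o_2})\rangle
=\langle\delta_{o_1},(A^{(1)})^m\delta_{o_1}\rangle\,\langle\delta_{o_2},(A^{(2)})^m\delta_{o_2}\rangle,
\]
and the right-hand side is exactly $W_m(o_1;G_1)\,W_m(o_2;G_2)$ by the definition of $W_m$, which also handles the $m=0$ convention $W_0=1$ on both sides.

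There is no real obstacle here; the statement is essentially a bookkeeping consequence of the tensor structure, and the only thing requiring a moment's care is the induction establishing $(A^{(1)}\otimes A^{(2)})^m=(A^{(1)})^m\otimes(A^{(2)})^m$ and the remark that all operators are applied only to finitely supported vectors so that the mixed-product rule applies termwise without convergence concerns. As an alternative (and perhaps more illuminating) route, one could prove the same identity combinatorially: an $m$-step walk in $G_1\times_K G_2$ from $(o_1,o_2)$ to itself is, by the adjacency relation $(x,y)\sim_K(x',y')\iff x\sim x'$ and $y\sim y'$, precisely a pair consisting of an $m$-step closed walk at $o_1$ in $G_1$ and an $m$-step closed walk at $o_2$ in $G_2$; the map sending a closed walk $((x_0,y_0),\dots,(x_m,y_m))$ to the pair $((x_0,\dots,x_m),(y_0,\dots,y_m))$ is a bijection onto the product of the two walk-sets, and counting the two sides gives the formula. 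I would present the operator-theoretic proof as the main argument since it is shortest, and perhaps mention the combinatorial bijection in a sentence for intuition.
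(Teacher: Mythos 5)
Your proposal is correct and follows essentially the same route as the paper: both pass to the tensor factorization $A=A^{(1)}\otimes A^{(2)}$, use $(A^{(1)}\otimes A^{(2)})^m=(A^{(1)})^m\otimes(A^{(2)})^m$, and factor the inner product. The extra remarks on local finiteness and the combinatorial bijection are sound but not needed beyond what the paper already does.
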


\begin{proof}
Let $A^{(1)}$ and $A^{(2)}$ denote the adjacency matrices of $G_1$ and $G_2$,
respectively.
Let $A$ be the adjacency matrix of the Kronecker product $G_1\times_K G_2$. 
Using the natural isomorphism $\ell^2(V_1\times V_2)\cong
\ell^2(V_1)\otimes\ell^2(V_2)$ and 
$A=A^{(1)}\otimes A^{(2)}$ as in \eqref{3eqn:def od A for Kronecker product}
we calculate as follows:
\begin{align*}
W_m((o_1,o_2);G_1\times_K G_2)
&=\langle \delta_{(o_1,o_2)},A^m\delta_{(o_1,o_2)}\rangle \\
&=\langle \delta_{o_1}\otimes\delta_{o_2},
 (A^{(1)}\otimes A^{(2)})^m\delta_{o_1}\otimes\delta_{o_2}\rangle \\
&=\langle \delta_{o_1}, (A^{(1)})^m\delta_{o_1}\rangle
  \langle \delta_{o_2}, (A^{(2)})^m\delta_{o_2}\rangle \\
&=W_m(o_1;G_1)W_m(o_2;G_2),
\end{align*}
which completes the proof.
\end{proof}

%%%%%%%%%%%%%%%%%%%%%%%%%%%%%%%%%%%%%%%%%%%%%%%%%%%%%%%%%%%%
\subsection{Mellin convolution of 
symmetric probability distribution on $\mathbb{R}$}

We focus on symmetric probability distributions $\mu$ on $\mathbb{R}$ 
having finite moments of all orders.
Since $M_{2m+1}(\mu)=0$ holds for all $m=0,1,2,\dots$,
we are mostly interested in the even moments.
For such probability distributions $\mu$ and $\nu$,
there exists a probability distribution, denoted by $\mu*_M\nu$,
uniquely specified by
\[
\int_{\mathbb{R}}h(x)\mu*_M\nu(dx)
=\int_{\mathbb{R}}\int_{\mathbb{R}}h(xy)\mu(dx)\nu(dy),
\qquad h\in C_{\mathrm{bdd}}(\mathbb{R}).
\]
We call $\mu*_M\nu$ the \textit{Mellin convolution}.
It is easily seen that $\mu*_M\nu$ is symmetric
and has finite moments of all orders.
In fact, 

\begin{proposition}\label{prop:moments Mellin convolution}
$M_m(\mu*_M \nu)=M_m(\mu)M_m(\nu)$ for all $m=0,1,2,\dots$.
\end{proposition}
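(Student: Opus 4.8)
The plan is to compute the $m$-th moment of $\mu *_M \nu$ directly from its defining property by choosing the test function $h(x) = x^m$. Strictly speaking $h(x)=x^m$ is not in $C_{\mathrm{bdd}}(\mathbb{R})$, so the first step is to justify that the defining identity extends from bounded continuous functions to polynomials. This is routine: since $\mu$ and $\nu$ are symmetric with finite moments of all orders, their product measure on $\mathbb{R}^2$ has finite absolute moments, so $\int\int |xy|^m\,\mu(dx)\nu(dy) < \infty$, and one approximates $x^m$ by truncations $h_N(x) = (x \wedge N)\vee(-N))^m$ (or uses a standard monotone/dominated convergence argument) to pass the identity $\int h\, d(\mu*_M\nu) = \int\int h(xy)\,d\mu\,d\nu$ to $h(x)=x^m$.

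Once that extension is in hand, the computation is immediate:
\[
M_m(\mu *_M \nu)
= \int_{\mathbb{R}} x^m\, \mu *_M \nu(dx)
= \int_{\mathbb{R}}\int_{\mathbb{R}} (xy)^m\, \mu(dx)\,\nu(dy)
= \int_{\mathbb{R}} x^m\,\mu(dx)\,\int_{\mathbb{R}} y^m\,\nu(dy)
= M_m(\mu)\,M_m(\nu),
\]
where the factorization of the double integral is Fubini's theorem, applicable because of the finite absolute moment noted above. The case $m=0$ is the statement that $\mu*_M\nu$ is a probability measure, which follows from taking $h\equiv 1$.

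The main (and only) obstacle is the technical point that the test functions in the definition of $*_M$ are bounded, whereas moments require unbounded $h$; everything else is a one-line calculation. I would dispatch this either by the truncation argument above or simply by remarking that the defining identity, together with finiteness of all moments of $\mu$ and $\nu$, forces the extension to hold for all polynomials by a standard density/uniform-integrability argument. It is also worth remarking in passing that Proposition~\ref{prop:moments Mellin convolution} makes transparent the parallel with Theorem~\ref{thm:number of walks of Kronecker product}: if $\mu_i$ is a spectral distribution of $G_i$ at $o_i$, then $\mu_1 *_M \mu_2$ has $m$-th moment $W_m(o_1;G_1)W_m(o_2;G_2) = W_m((o_1,o_2);G_1\times_K G_2)$, so $\mu_1 *_M \mu_2$ is a spectral distribution of the Kronecker product — which is presumably the content of the forthcoming Theorem~\ref{main theorem}.
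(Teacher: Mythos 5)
Your proof is correct and matches the paper's (implicit) argument: the paper states this proposition without proof as an immediate consequence of applying the defining identity to $h(x)=x^m$ and factorizing via Fubini, exactly as you do. Your extra care in extending the identity from bounded continuous test functions to polynomials via truncation and finiteness of all moments is a legitimate technical point that the paper elides, and it is handled correctly.
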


Combining Theorems
\ref{2thm:spectral distribution}, 
\ref{thm:number of walks of Kronecker product}
and Proposition \ref{prop:moments Mellin convolution},
we come to the following fundamental result.

\begin{theorem}\label{main theorem}
For $i=1,2$ let $G_i=(V_i,E_i)$ be a graph with a distinguished vertex $o_i$.
Let $\mu_i$ be the spectral distribution of the adjacency matrix $A^{(i)}$ of
$G_i$ in the vector state at $o_i$.
Assume that $\mu_i$ is symmetric,
or equivalently that $W_{2m+1}(G_i, o_i)=0$
for all $m=0,1,2,\dots$ and $i=1,2$.
Then we have
\[
W_m((o_1,o_2);G_1\times_K G_2)
=M_m(\mu_1*_M\mu_2),
\qquad m=0,1,2,\dots.
\]
In other words, the spectral distribution of 
the Kronecker product $G_1\times_K G_2$ in the vector state at 
$(o_1,o_2)$ is the Mellin convolution of $\mu_1$ and $\mu_2$.
\end{theorem}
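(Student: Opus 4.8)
The plan is to assemble the three already-established ingredients in the obvious way; in fact the statement is an immediate corollary, and the only real ``content'' is matching up the hypotheses correctly. First I would invoke Theorem \ref{2thm:spectral distribution} applied to the graph $G_1\times_K G_2$ at the vertex $(o_1,o_2)$: this produces a probability distribution $\mu$ on $\mathbb{R}$ with $W_m((o_1,o_2);G_1\times_K G_2)=M_m(\mu)$ for all $m$. The goal is then to show $\mu$ can be taken to be $\mu_1*_M\mu_2$.

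Next I would chain the two multiplicativity facts. By Theorem \ref{thm:number of walks of Kronecker product} we have
\[
W_m((o_1,o_2);G_1\times_K G_2)=W_m(o_1;G_1)\,W_m(o_2;G_2),
\]
and since $\mu_i$ is the spectral distribution of $A^{(i)}$ at $o_i$, Theorem \ref{2thm:spectral distribution} gives $W_m(o_i;G_i)=M_m(\mu_i)$. Hence the left-hand side equals $M_m(\mu_1)M_m(\mu_2)$. On the other side, Proposition \ref{prop:moments Mellin convolution} tells us $M_m(\mu_1*_M\mu_2)=M_m(\mu_1)M_m(\mu_2)$. Comparing, $W_m((o_1,o_2);G_1\times_K G_2)=M_m(\mu_1*_M\mu_2)$ for every $m=0,1,2,\dots$, which is the first displayed assertion.

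For the final sentence --- identifying $\mu_1*_M\mu_2$ as \emph{the} spectral distribution --- I would note that $\mu_1*_M\mu_2$ is a probability distribution on $\mathbb{R}$ (by its construction in the preceding subsection) whose moments agree with the walk-counting numbers $W_m((o_1,o_2);G_1\times_K G_2)$; by the definition following Theorem \ref{2thm:spectral distribution}, any such distribution qualifies as a spectral distribution of the adjacency matrix at $(o_1,o_2)$. One should also remark, for consistency with the symmetry discussion there, that $W_{2m+1}((o_1,o_2);G_1\times_K G_2)=W_{2m+1}(o_1;G_1)W_{2m+1}(o_2;G_2)=0$ under the hypothesis, and that $\mu_1*_M\mu_2$ is indeed symmetric (already observed in the subsection), so the two notions of ``symmetric spectral distribution'' match up.

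The step I expect to need the most care is not a computation but a bookkeeping point about uniqueness: spectral distributions need not be unique in the indeterminate-moment case, so the phrase ``the spectral distribution'' must be read as ``a spectral distribution, and the canonical one if the moment problem is determinate.'' If one wants the sharper ``the'' statement, the cleanest route is to add the standing assumption that the vertex degrees of $G_1$ and $G_2$ are uniformly bounded --- then so are those of $G_1\times_K G_2$ (the degree of $(x,y)$ is $\deg_{G_1}(x)\deg_{G_2}(y)$), the moment problems are all determinate, and equality of moments forces equality of the measures. I would phrase the proof to cover both readings, leaning on Proposition \ref{prop:moments Mellin convolution} as the sole nontrivial input and treating everything else as a direct substitution.
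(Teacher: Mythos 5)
Your proposal is correct and follows exactly the route the paper takes: the paper states the theorem as an immediate consequence of combining Theorem \ref{2thm:spectral distribution}, Theorem \ref{thm:number of walks of Kronecker product} and Proposition \ref{prop:moments Mellin convolution}, which is precisely your chain of equalities. Your extra remarks on symmetry of $\mu_1*_M\mu_2$ and on the uniqueness caveat in the indeterminate-moment case are sensible bookkeeping that the paper leaves implicit.
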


The Mellin convolution is originally introduced 
on the basis of the locally compact abelian group $\mathbb{R}_{>0}=(0,\infty)$, 
see Introduction. In this connection we should note the following

\begin{proposition}\label{Prop:density of Mellin convolution}
Let $f(x)$ and $g(x)$ be symmetric density functions on $\mathbb{R}$
and consider the probability distributions
$\mu(dx)=f(x)dx$ and $\nu(dx)=g(x)dx$.
Then $\mu*_M\nu$ admits a symmetric density function $2f\star g(x)$,
where $f\star g$ is the (original) Mellin convolution 
defined in \eqref{eq-Mellin-Convolution}.
\end{proposition}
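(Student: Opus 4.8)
The plan is to start from the moment identity and ``integrate it up'' to a statement about densities, using the fact that a symmetric probability distribution on $\mathbb{R}$ is determined on test functions of the form $h(x)=|x|^s$ (or, after a logarithmic change of variables, by its behaviour as a measure on $(0,\infty)$ pushed forward under $x\mapsto\log x$). Concretely, I would first reduce to the positive half-line: since $f$ and $g$ are symmetric, the pushforwards of $\mu$ and $\nu$ under $x\mapsto |x|$ have densities $2f(x)$ and $2g(x)$ on $(0,\infty)$, and by the definition of $\mu*_M\nu$ via $h(xy)$ the pushforward of $\mu*_M\nu$ under $x\mapsto|x|$ is exactly the image of the product measure under $(x,y)\mapsto |x|\,|y|$ on $(0,\infty)\times(0,\infty)$. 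So the claim becomes: the multiplicative convolution on $(0,\infty)$ of the densities $2f$ and $2g$ has density $2f\star g$, where $\star$ is as in \eqref{eq-Mellin-Convolution}.

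Next I would verify this multiplicative-convolution statement directly. Writing $P(dx)=2f(x)\,dx$ and $Q(dy)=2g(y)\,dy$ on $(0,\infty)$, for $h\in C_{\mathrm{bdd}}$ we have
\[
\int_0^\infty h(x)\,(P\cdot Q)(dx)
=\int_0^\infty\!\!\int_0^\infty h(xy)\,2f(x)\,2g(y)\,dx\,dy,
\]
and substituting $u=xy$ (for fixed $y$, so $dx=du/y$) and then applying Fubini gives
\[
\int_0^\infty h(u)\left(\int_0^\infty 2f\Big(\frac{u}{y}\Big)\,2g(y)\,\frac{dy}{y}\right)du
=\int_0^\infty h(u)\,\Big(2\cdot 2f\star g(u)\Big)\,du,
\]
using \eqref{eq-Mellin-Convolution} and pulling one factor of $2$ out. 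Hence $P\cdot Q$ has density $4f\star g$ on $(0,\infty)$. Unfolding back to $\mathbb{R}$: the symmetric distribution $\mu*_M\nu$ has density on $(0,\infty)$ equal to half of that, i.e. $2f\star g$, and by symmetry the same on $(-\infty,0)$; this is precisely the assertion. I would also remark in passing that this density is consistent with Proposition \ref{prop:moments Mellin convolution}, since $\int_{\mathbb{R}}x^m\cdot 2f\star g(|x|)\,dx$ reproduces $M_m(\mu)M_m(\nu)$ by the same Fubini computation with $h(x)=x^m$, which is a useful sanity check even though it is not logically needed.

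The only genuine point requiring care — the ``hard part'', such as it is — is the interchange of integrals and the change of variables: one must check that $f\star g$ is well defined (finite a.e.) and integrable, so that Fubini applies and the resulting object really is a density. This follows because $f,g\in L^1(\mathbb{R})$ with $f\star g$ the multiplicative convolution of two $L^1((0,\infty),dx)$ functions after the substitution, hence itself in $L^1$; equivalently, $(0,\infty)$ with $dx/x$ is a locally compact abelian group and convolution of $L^1$ functions there is $L^1$, which is exactly the classical setting of \eqref{eq-Mellin-Convolution} cited from \cite{Marichev}. No other step is delicate: the bookkeeping of the factors of $2$ is the main thing to get right, and the symmetry reduction is immediate from the definitions.
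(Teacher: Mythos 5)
Your argument is correct and is essentially the paper's own proof: both reduce to the positive half-line using the symmetry of $f$, $g$ and of the test function (picking up the factor $4$), substitute $u=xy$, apply Fubini--Tonelli, and unfold back to $\mathbb{R}$ (dividing by $2$) to arrive at the density $2f\star g$; your extra care about well-definedness is a welcome addition the paper omits. The only caveat is your parenthetical appeal to the group algebra $L^1((0,\infty),dx/x)$: the densities $f,g$ need not lie in that space (e.g.\ the arcsine density is bounded below near $0$, so $\int_0 f(x)\,dx/x$ diverges), but your primary justification via $\int_0^\infty f\star g(x)\,dx\le\int_0^\infty f\,dx\int_0^\infty g\,dx<\infty$, i.e.\ Tonelli for nonnegative integrands, is the right one and suffices.
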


\begin{proof}
By definition, for a symmetric function
$h\in C_{\mathrm{bdd}}(\mathbb{R})$ we have
\begin{align*}
\int_{\mathbb{R}}h(x)\mu*_M\nu(dx)
&=\int_{\mathbb{R}}\int_{\mathbb{R}}h(xy)\mu(dx)\nu(dy) \\
&=4\int_0^{\infty}\int_0^{\infty}h(xy)f(x)g(y)dxdy \\
&=4\int_0^{\infty} g(y)dy \int_0^{\infty}h(x)f\Big(\frac{x}{y}\Big)
 \frac{dx}{y} \\
&=2\int_{\mathbb{R}} h(x) dx
  \int_0^{\infty}f\Big(\frac{x}{y}\Big)g(y)\frac{dy}{y}\,.
\end{align*}
Hence, $2f\star g(x)$ is the density function of $\mu*_M\nu$.
\end{proof}

For the readers' convenience we make comparison with the Cartesian product.
The classical convolution of two probability distributions 
$\mu$ and $\nu$ is
a probability distribution, denoted by $\mu*\nu$,
uniquely specified by
\[
\int_{\mathbb{R}}h(x)\mu*\nu(dx)
=\int_{\mathbb{R}}\int_{\mathbb{R}}h(x+y)\mu(dx)\nu(dy),
\qquad h\in C_{\mathrm{bdd}}(\mathbb{R}).
\]
By applying the binomial expansion we get the following.

\begin{proposition}\label{prop:M_m(G_1XCG_2)}
For $i=1,2$ let $G_i=(V_i,E_i)$ be a graph with a distinguished vertex $o_i$.
Let $\mu_i$ be the spectral distribution of the adjacency matrix $A^{(i)}$ of
$G_i$ in the vector state at $o_i$.
Then we have
\begin{align*}
W_m((o_1,o_2);G_1\times_C G_2)
& =\sum_{k=0}^m \binom{m}{k} W_k(o_1;G_1)W_{m-k}(o_2;G_2)\\
& =M_m(\mu_1*\mu_2),
\qquad m=0,1,2,\dots,
\end{align*}
where $\mu_1*\mu_2$ is the (classical) convolution.
In other words, the spectral distribution of 
the Cartesian product $G_1\times_C G_2$ in the vector state at 
$(o_1,o_2)$ is the convolution of $\mu_1$ and $\mu_2$.
\end{proposition}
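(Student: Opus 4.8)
The plan is to prove Proposition~\ref{prop:M_m(G_1XCG_2)} directly by combining the tensor-sum structure of the Cartesian product with the binomial theorem, exactly in parallel with the proof of Theorem~\ref{thm:number of walks of Kronecker product}. First I would recall that under the isomorphism $\ell^2(V_1\times V_2)\cong\ell^2(V_1)\otimes\ell^2(V_2)$ the adjacency matrix of $G_1\times_C G_2$ is $A=A^{(1)}\otimes I^{(2)}+I^{(1)}\otimes A^{(2)}$, and that the two summands $A^{(1)}\otimes I^{(2)}$ and $I^{(1)}\otimes A^{(2)}$ commute. Hence the ordinary binomial theorem applies to $A^m$:
\[
A^m=\sum_{k=0}^m\binom{m}{k}\bigl(A^{(1)}\otimes I^{(2)}\bigr)^k\bigl(I^{(1)}\otimes A^{(2)}\bigr)^{m-k}
=\sum_{k=0}^m\binom{m}{k}(A^{(1)})^k\otimes(A^{(2)})^{m-k}.
\]

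Next I would take the matrix element at $\delta_{(o_1,o_2)}=\delta_{o_1}\otimes\delta_{o_2}$. Using that the inner product factors over the tensor product,
\[
W_m((o_1,o_2);G_1\times_C G_2)
=\langle\delta_{o_1}\otimes\delta_{o_2},A^m(\delta_{o_1}\otimes\delta_{o_2})\rangle
=\sum_{k=0}^m\binom{m}{k}\langle\delta_{o_1},(A^{(1)})^k\delta_{o_1}\rangle\langle\delta_{o_2},(A^{(2)})^{m-k}\delta_{o_2}\rangle,
\]
which is precisely $\sum_{k=0}^m\binom{m}{k}W_k(o_1;G_1)W_{m-k}(o_2;G_2)$. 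This establishes the first displayed equality.

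For the second equality, I would invoke Theorem~\ref{2thm:spectral distribution} to write $W_k(o_i;G_i)=M_k(\mu_i)$, so the sum becomes $\sum_{k=0}^m\binom{m}{k}M_k(\mu_1)M_{m-k}(\mu_2)$. On the other hand, expanding the defining identity of the classical convolution with $h(x)=x^m$ and applying the binomial theorem inside the double integral gives
\[
M_m(\mu_1*\mu_2)=\int_{\mathbb{R}}\int_{\mathbb{R}}(x+y)^m\mu_1(dx)\mu_2(dy)
=\sum_{k=0}^m\binom{m}{k}\int_{\mathbb{R}}x^k\mu_1(dx)\int_{\mathbb{R}}y^{m-k}\mu_2(dy),
\]
and the right-hand side is the same sum. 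Since the $W_m$ agree with all moments of $\mu_1*\mu_2$, the last sentence of the proposition follows from the identification in Theorem~\ref{2thm:spectral distribution}. I do not anticipate a genuine obstacle here; the only point requiring a word of care is the interchange of summation and integration, which is harmless because the sum is finite, and the implicit assumption that $\mu_1,\mu_2$ (hence their moments) are well defined, which is guaranteed for the spectral distributions of locally finite graphs.
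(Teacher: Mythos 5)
Your proof is correct and follows exactly the route the paper indicates: the paper proves this proposition simply by remarking ``by applying the binomial expansion,'' which is precisely your argument of expanding $A^m=\bigl(A^{(1)}\otimes I^{(2)}+I^{(1)}\otimes A^{(2)}\bigr)^m$ using commutativity and matching the resulting sum with the moments of the classical convolution. Nothing further is needed.
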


%%%%%%%%%%%%%%%%%%%%%%%%%%%%%%%%%%%%%%%%%%%%%%%%%%%%%%%%%%%%%%%%%%%
%%%%%%%%%%%%%%%%%%%%%%%%%%%%%%%%%%%%%%%%%%%%%%%%%%%%%%%%%%%%%%%%%%%
\section{Subgraphs of 2-dimensional lattice as Kronecker products}
\label{Sec:Subgraphs of 2-dimensional lattice as Kronecker products}

\subsection{The Kronecker product $\mathbb{Z}\times_K \mathbb{Z}$}\enspace

In order to avoid confusion we use the symbol $\mathbb{Z}^2$ just for the
Cartesian product set.
The Kronecker product $\mathbb{Z}\times_K \mathbb{Z}$
is by definition a graph on 
$\mathbb{Z}^2=\{(u,v)\,;\, u,v\in \mathbb{Z}\}$ with adjacency relation:
\begin{equation}\label{eqn:Mellin adjacency (1)}
(u,v)\sim_K(u^\prime, v^\prime)
\quad\Longleftrightarrow\quad
u^\prime=u\pm 1
\quad\text{and}\quad
v^\prime=v\pm 1.
\end{equation}
While, the so-called 2-dimensional integer lattice
is a graph on $\mathbb{Z}^2$ with adjacency relation:
\[
(x,y)\sim(x^\prime, y^\prime)
\quad\Longleftrightarrow\quad
\begin{cases}
x^\prime=x\pm 1, \\
y^\prime=y,
\end{cases}
\text{or}\quad
\begin{cases}
x^\prime=x, \\
y^\prime=y\pm 1.
\end{cases}
\]
We see immediately from definition that
$\mathbb{Z}\times_K \mathbb{Z}$ has two connected components,
each of which is isomorphic to
the 2-dimensional integer lattice $\mathbb{Z}\times_C \mathbb{Z}$.
Denoting by $(\mathbb{Z}\times_K \mathbb{Z})^o$ 
the connected component of $\mathbb{Z}\times_K \mathbb{Z}$
containing $o=(0,0)$, 
we claim the following

\begin{theorem}\label{thm:ZXMZ}
$(\mathbb{Z}\times_K \mathbb{Z})^o
\cong\mathbb{Z}\times_C \mathbb{Z}$,
where the isomorphism preserves the origin.
\end{theorem}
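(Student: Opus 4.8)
The plan is to realize the isomorphism as the ``$45^{\circ}$ rotation'' of the integer plane. First I would read off from \eqref{eqn:Mellin adjacency (1)} that the neighbours of a vertex $(u,v)$ in $\mathbb{Z}\times_K\mathbb{Z}$ are the four points $(u\pm1,v\pm1)$ with the two signs chosen independently. In particular each edge changes $u+v$ by $0$ or $\pm2$, so the parity of $u+v$ is constant along every walk. Since $o=(0,0)$ satisfies $u+v=0$, this gives the inclusion
\[
(\mathbb{Z}\times_K\mathbb{Z})^o\subseteq L:=\{(u,v)\in\mathbb{Z}^2:u+v\ \text{is even}\}=\{(u,v):u\equiv v\ (\mathrm{mod}\ 2)\}.
\]

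Next I would introduce the map $\varphi:L\to\mathbb{Z}^2$ given by
\[
\varphi(u,v)=\Big(\tfrac{u+v}{2},\ \tfrac{u-v}{2}\Big).
\]
On $L$ both $u+v$ and $u-v$ are even, so $\varphi$ is well defined, and $(s,t)\mapsto(s+t,s-t)$ is a two-sided inverse; hence $\varphi$ is a bijection of $L$ onto $\mathbb{Z}^2$ with $\varphi(0,0)=(0,0)$, so the marked vertex is respected.

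The heart of the argument is then a one-line adjacency check: writing $\varphi(u,v)=(s,t)$, the four neighbours $(u+1,v+1),(u+1,v-1),(u-1,v+1),(u-1,v-1)$ of $(u,v)$ are carried by $\varphi$ to $(s+1,t),(s,t+1),(s,t-1),(s-1,t)$, which are exactly the four neighbours of $(s,t)$ in the $2$-dimensional integer lattice $\mathbb{Z}\times_C\mathbb{Z}$. Thus $\varphi$ is a graph isomorphism from the subgraph of $\mathbb{Z}\times_K\mathbb{Z}$ induced on $L$ onto $\mathbb{Z}\times_C\mathbb{Z}$. Since $\mathbb{Z}\times_C\mathbb{Z}$ is connected, the subgraph induced on $L$ is connected as well; as $L$ receives no edge from its complement (every edge preserves the parity of $u+v$), this subgraph is a whole connected component of $\mathbb{Z}\times_K\mathbb{Z}$, and by the inclusion above it must be the component $(\mathbb{Z}\times_K\mathbb{Z})^o$ of the origin. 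Restricting $\varphi$ then yields the asserted origin-preserving isomorphism. I do not expect a genuine obstacle here: the only idea needed is the choice of the coordinate change $\varphi$, after which each step is an immediate verification; the one point worth stating carefully is the identification of the component of $o$ with the sublattice $L$.
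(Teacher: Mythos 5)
Your proof is correct and is essentially the argument the paper has in mind: the paper simply asserts that the claim is ``immediate from the definition'' (with supporting figures), and your explicit $45^{\circ}$ coordinate change $\varphi(u,v)=\bigl(\tfrac{u+v}{2},\tfrac{u-v}{2}\bigr)$, together with the parity argument identifying the component of the origin, is the standard way to make that assertion precise. No gaps; the adjacency check and the identification of $L$ as a full connected component are both carried out correctly.
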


Here we prepare a general result.

\begin{proposition}\label{4prop:induced subgraph}
For $i=1,2$ let $G_i=(V_i,E_i)$ be a graph
and $H_i=(W_i,F_i)$ an induced subgraph of $G_i$.
Then $H_1\times_K H_2$ is an induced subgraph of $G_1\times_K G_2$.
\end{proposition}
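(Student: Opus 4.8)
The plan is to work directly from the definitions of induced subgraph and Kronecker product, verifying two things: first that the vertex set of $H_1 \times_K H_2$ is a subset of that of $G_1 \times_K G_2$, and second that its edge set is exactly the restriction of the edge set of $G_1 \times_K G_2$ to that vertex subset.

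For the vertex sets, I would simply note that $W_i \subseteq V_i$ gives $W_1 \times W_2 \subseteq V_1 \times V_2$, which is the vertex set of $G_1 \times_K G_2$. For the edges, I would fix two vertices $(x,y)$ and $(x',y')$ with $x,x' \in W_1$ and $y,y' \in W_2$, and chase the equivalences. By the adjacency rule for the Kronecker product, $(x,y) \sim (x',y')$ in $H_1 \times_K H_2$ iff $x \sim_{H_1} x'$ and $y \sim_{H_2} y'$. Since $H_i$ is an \emph{induced} subgraph of $G_i$ and $x,x' \in W_1$, $y,y' \in W_2$, we have $x \sim_{H_1} x' \Longleftrightarrow x \sim_{G_1} x'$ and likewise for the second coordinate. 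Combining, $(x,y) \sim (x',y')$ in $H_1 \times_K H_2$ iff $x \sim_{G_1} x'$ and $y \sim_{G_2} y'$, which is precisely the condition for $(x,y) \sim (x',y')$ in $G_1 \times_K G_2$. Thus an edge of $G_1 \times_K G_2$ joining two vertices of $W_1 \times W_2$ is already an edge of $H_1 \times_K H_2$, and conversely every edge of $H_1 \times_K H_2$ is an edge of $G_1 \times_K G_2$; this is exactly the definition of being an induced subgraph.

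There is really no significant obstacle here — the statement is a bookkeeping exercise that hinges on correctly tracking the word ``induced.'' The only point requiring a little care is making sure that the hypothesis $x,x',y,y' \in W_i$ is invoked when passing between adjacency in $H_i$ and adjacency in $G_i$; without ``induced'' one would only get a spanning subgraph, not an induced one. I would present the argument as a short chain of iff's, perhaps phrased in terms of the adjacency relation $(x,y)\sim_K(x',y') \Longleftrightarrow x\sim x'$ and $y\sim y'$ established in Section~\ref{Sec:Kronecker product of graphs}, rather than through adjacency matrices, since the relational form makes the role of the inducedness hypothesis most transparent.
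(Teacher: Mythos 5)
Your proposal is correct and follows essentially the same route as the paper's own proof: a chain of equivalences reducing adjacency in $H_1\times_K H_2$ to adjacency in each $H_i$, invoking the inducedness of $H_i$ in $G_i$ to pass to adjacency in each $G_i$, and reassembling into adjacency in $G_1\times_K G_2$. Nothing is missing.
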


\begin{proof}
By definition the vertex set of $H_1\times_K H_2$ is $W_1\times W_2$.
For two verices $(x,y), (x^\prime,y^\prime)\in W_1\times W_2$
we have $(x,y)\sim(x^\prime,y^\prime)$ 
in $H_1\times_K H_2$ if and only if
$x\sim x^\prime$ in $H_1$ and
$y\sim y^\prime$ in $H_2$ by definition.
Since $H_1$ and $H_2$ are respectively induced subgraphs of $G_1$ and $G_2$,
the last condition is equivalent to that
$x\sim x^\prime$ in $G_1$ and
$y\sim y^\prime$ in $G_2$,
hence to that
$(x,y)\sim (x^\prime,y^\prime)$ in $G_1\times_K G_2$.
Consequently,
$H_1\times_K H_2$ is an induced subgraph of
$G_1\times_K G_2$ spanned by $W_1\times W_2$.
\end{proof}

%%%%%%%%%%%%%%%%%%%%%%%%%%%%%%%%%%%%%%%%%%%%
\subsection{Subgraphs of 2-dimensional integer lattice}

For a subset $D\subset \mathbb{Z}^2$ let
$L[D]$ denote the lattice restricted to $D$,
i.e., the induced subgraph of $\mathbb{Z}\times_C\mathbb{Z}$
spanned by the vertices in $D$.
We are particularly interested in 
restricted lattices which admit Kronecker product structure.
Theorem \ref{thm:ZXMZ} says that
$\mathbb{Z}\times_C \mathbb{Z}=L[\mathbb{Z}^2]$ itself
is isomorphic to the Kronecker product $(\mathbb{Z}\times_K \mathbb{Z})^o$.

\begin{theorem}\label{thm:ZXMP5 and Z_+XMZ}
For $n\ge2$ we have
\[
L\{(x,y)\in\mathbb{Z}^2\,;\, x\ge y \ge x-(n-1)\}
\cong (P_n\times_K \mathbb{Z})^o,
\]
where the right-hand side stands for
the connected component of $P_n\times_K \mathbb{Z}$
containing $o=(0,0)$,
$P_n$ being the path on $\{0,1,\dots,n-1\}$.
Similarly,
\[
L\{(x,y)\in\mathbb{Z}^2\,;\, x\ge y \}
\cong (\mathbb{Z}_+\times_K \mathbb{Z})^o.
\]
\end{theorem}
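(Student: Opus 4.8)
The plan is to reduce both statements to the isomorphism $(\mathbb{Z}\times_K\mathbb{Z})^o\cong\mathbb{Z}\times_C\mathbb{Z}$ of Theorem~\ref{thm:ZXMZ}, whose underlying bijection may be taken to be the ``$45^{\circ}$-rotation'' $\Psi(u,v)=((u+v)/2,\ (u-v)/2)$: it fixes the origin, sends the $o$-component $\{(u,v):u+v\ \text{even}\}$ of $\mathbb{Z}\times_K\mathbb{Z}$ onto all of $\mathbb{Z}^2$, and carries the four Kronecker-neighbours $(u\pm1,v\pm1)$ of $(u,v)$ to the four lattice-neighbours of $\Psi(u,v)$. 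First I would note that the path $P_n$ on $\{0,1,\dots,n-1\}$ and the half-line $\mathbb{Z}_+$ on $\{0,1,2,\dots\}$ are both \emph{induced} subgraphs of $\mathbb{Z}$, so by Proposition~\ref{4prop:induced subgraph} the graphs $P_n\times_K\mathbb{Z}$ and $\mathbb{Z}_+\times_K\mathbb{Z}$ are the induced subgraphs of $\mathbb{Z}\times_K\mathbb{Z}$ spanned, respectively, by the strip $S_n=\{0,1,\dots,n-1\}\times\mathbb{Z}$ and the half-plane $S_\infty=\{0,1,2,\dots\}\times\mathbb{Z}$. Commutativity of $\times_K$ lets us put the factor $P_n$ (resp.\ $\mathbb{Z}_+$) on whichever coordinate is convenient.

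The key step is to pin down the connected component of $o=(0,0)$ inside each of these induced subgraphs. Along any walk in $\mathbb{Z}\times_K\mathbb{Z}$ each coordinate changes by $\pm1$, so $u+v$ changes by $0$ or $\pm2$; hence the $o$-component is contained in the parity class $C_n=\{(u,v)\in S_n:u+v\ \text{even}\}$, and likewise $C_\infty$ for $S_\infty$. For the reverse inclusion I would produce, for an arbitrary $(a,b)\in C_n$, an explicit walk from $(0,0)$ that never leaves $S_n$: zig-zag $(0,0)\to(1,1)\to(2,0)\to(3,1)\to\cdots$ until the first coordinate reaches $a$ (arriving at $(a,\beta)$ with $\beta\in\{0,1\}$ and $\beta\equiv b\pmod 2$, since $a+b$ is even), then repeatedly use the moves $(a,t)\to(a-1,t+1)\to(a,t+2)$ when $a\ge1$, or $(a,t)\to(a+1,t+1)\to(a,t+2)$ when $a=0$ (legitimate because $n\ge2$), together with their reverses, to change the second coordinate by any prescribed even amount. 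The same recipe works for $\mathbb{Z}_+$. The main obstacle is precisely this inclusion — getting the component \emph{exactly} right rather than merely up to containment; in the extreme case $n=2$, where the walk is forced to oscillate between the two lines $u=0$ and $u=1$, one checks that the recipe still applies.

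Finally I would invoke the fact that a connected component is automatically an induced subgraph, so $(P_n\times_K\mathbb{Z})^o$ is the induced subgraph of $(\mathbb{Z}\times_K\mathbb{Z})^o$ on $C_n$, and $\Psi$ maps it isomorphically, preserving the origin, onto $L[\Psi(C_n)]$. Writing $x=(u+v)/2$, $y=(u-v)/2$ so that $x+y=u$, one gets $\Psi(C_n)=\{(x,y)\in\mathbb{Z}^2:0\le x+y\le n-1\}$; composing with the origin-fixing automorphism $(x,y)\mapsto(x,-y)$ of $\mathbb{Z}\times_C\mathbb{Z}$ turns the domain into $\{(x,y):0\le x-y\le n-1\}=\{x\ge y\ge x-(n-1)\}$, which proves the first isomorphism. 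The second follows in exactly the same way with $C_\infty$ replacing $C_n$: here $\Psi(C_\infty)=\{(x,y):0\le x+y\}$, which the reflection sends to $\{x\ge y\}$. Alternatively, placing the bounded factor on the second coordinate makes $\Psi(C_n)$ equal to $\{0\le x-y\le n-1\}$ directly, at the cost of one appeal to commutativity of $\times_K$.
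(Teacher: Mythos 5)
Your proof is correct and follows essentially the same route as the paper: both rest on Proposition~\ref{4prop:induced subgraph} together with the origin-preserving isomorphism $(\mathbb{Z}\times_K\mathbb{Z})^o\cong\mathbb{Z}\times_C\mathbb{Z}$ of Theorem~\ref{thm:ZXMZ}. The only difference is that where the paper appeals to Figure~\ref{fig:restricted lattices} to identify the $o$-component and its image, you make the $45^\circ$-rotation $\Psi$, the parity argument, and the explicit connecting walks fully rigorous --- a filling-in of details rather than a different method.
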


\begin{proof}
The path $P_n$ is naturally
regarded as an induced subgraph of $\mathbb{Z}$
spanned by $\{0,1,2,n-1\}$.
It then follows from Proposition \ref{4prop:induced subgraph} that
$P_n\times_K\mathbb{Z}$ is an induced subgraph 
of $\mathbb{Z}\times_K \mathbb{Z}$.
Therefore, $(P_n\times_K \mathbb{Z})^o$ is 
an induced subgraph of $(\mathbb{Z}\times_K \mathbb{Z})^o$.
Then, in view of Figure \ref{fig:restricted lattices},
we see that $(P_n\times_K \mathbb{Z})^o$ is 
isomorphic to the induced subgraph of $\mathbb{Z}\times_C \mathbb{Z}$ 
spanned by $D=\{(x,y)\,;\, x\ge y \ge x-(n-1)\}$.
The second assertion is proved similarly.
\end{proof}
\begin{figure}[hbt]
\begin{center} 
\includegraphics[width=150pt,keepaspectratio,clip]{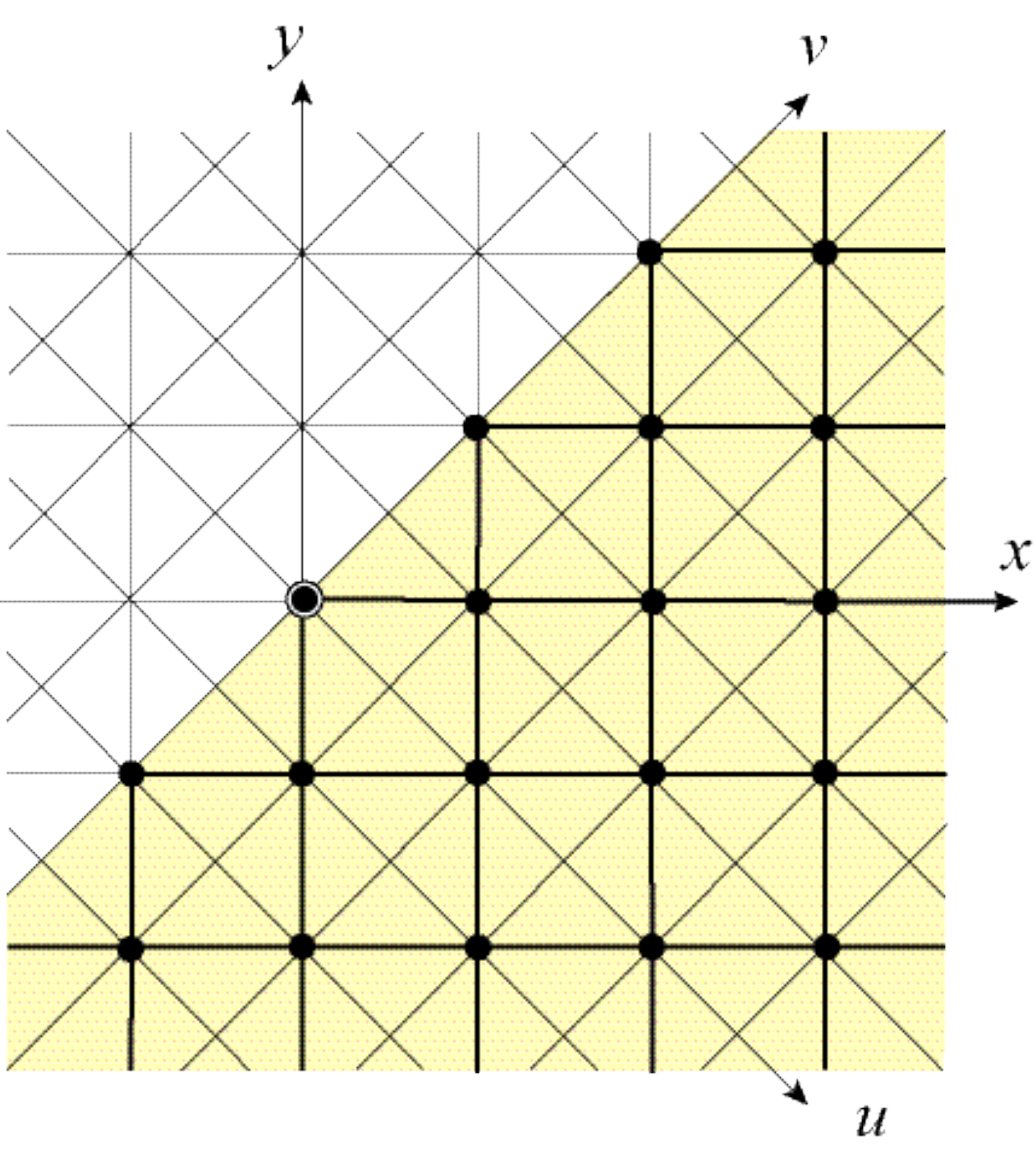}
\qquad
\includegraphics[width=150pt,keepaspectratio,clip]{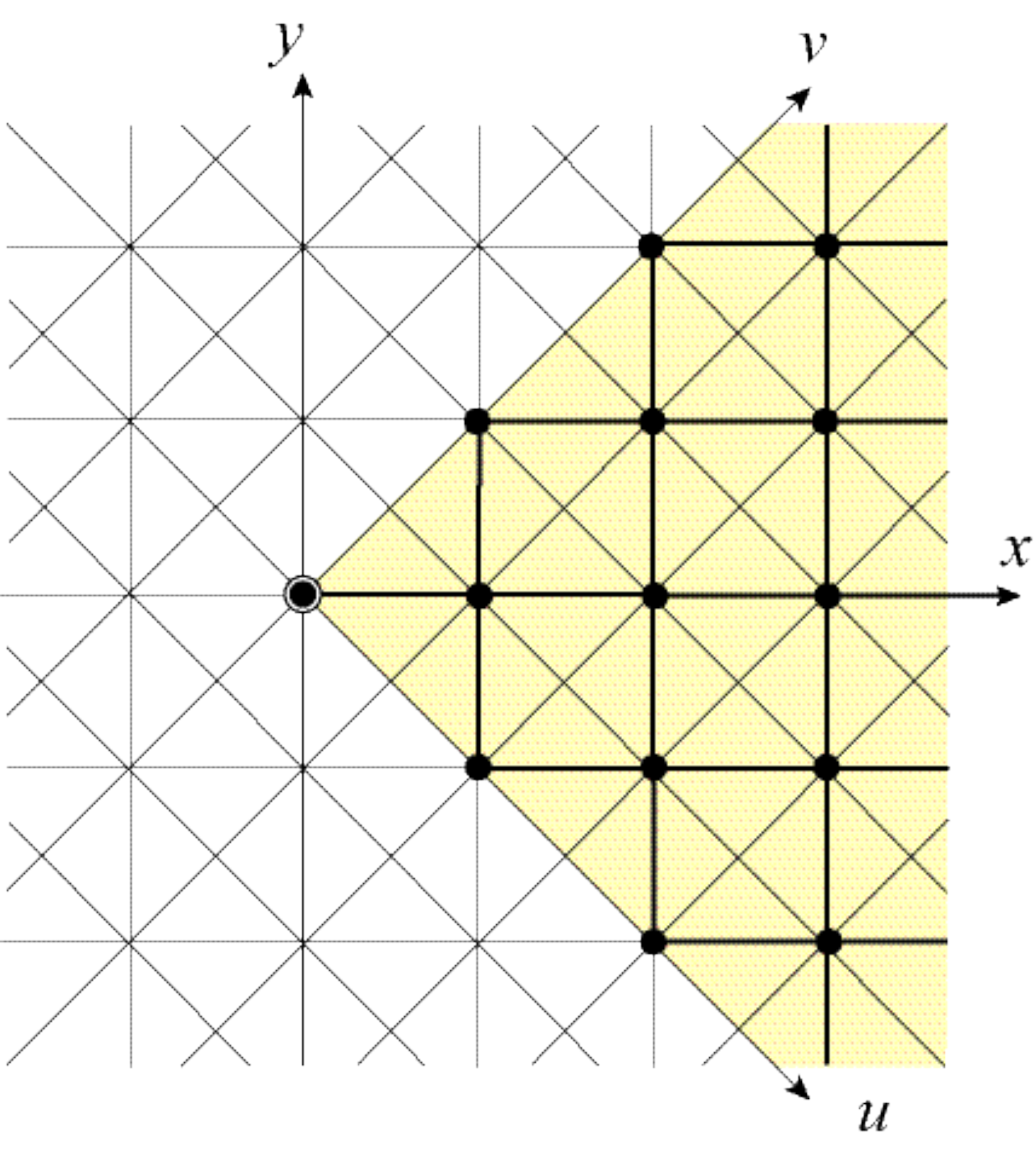}
\caption{$(\mathbb{Z}_+\times_K \mathbb{Z})^o\cong L(x\ge y)$
and $(\mathbb{Z}_+\times_K \mathbb{Z}_+)^o\cong L(-x\le y\le x)$}
\label{fig:restricted lattices} 
\end{center} 
\end{figure}

\begin{theorem}
\label{thm:PmXMPn}
For $k \ge2$ and $l\ge2$ we have
\[
L\left\{(x,y)\in\mathbb{Z}^2\,;\, 
\begin{array}{l}
0\le x+y\le k-1, \\
0\le x-y\le l-1
\end{array}
\right\}
\cong (P_k\times_K P_l)^o.
\]
Moreover,
\[
L\{(x,y)\in\mathbb{Z}^2\,;\, x\ge y\ge -x\}
\cong (\mathbb{Z}_+\times_K \mathbb{Z}_+)^o.
\]
\end{theorem}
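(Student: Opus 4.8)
The plan is to mirror the proof of Theorem \ref{thm:ZXMP5 and Z_+XMZ}, using Proposition \ref{4prop:induced subgraph} together with an explicit coordinate change. First I would regard $\mathbb{Z}_+$ as the induced subgraph of $\mathbb{Z}$ spanned by $\{0,1,2,\dots\}$, so that by Proposition \ref{4prop:induced subgraph} the graph $\mathbb{Z}_+\times_K \mathbb{Z}_+$ is an induced subgraph of $\mathbb{Z}\times_K\mathbb{Z}$, and hence $(\mathbb{Z}_+\times_K\mathbb{Z}_+)^o$ is an induced subgraph of $(\mathbb{Z}\times_K\mathbb{Z})^o$. The vertex set of $\mathbb{Z}_+\times_K\mathbb{Z}_+$ is $\{(a,b)\,;\,a,b\ge 0\}$, with $(a,b)\sim_K(a',b')$ iff $a'=a\pm1$ and $b'=b\pm1$; the component of $o=(0,0)$ consists of those $(a,b)$ with $a+b$ even.

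Next I would set up the isomorphism with $L\{x\ge y\ge -x\}$ by recalling from Theorem \ref{thm:ZXMZ} the map realizing $(\mathbb{Z}\times_K\mathbb{Z})^o\cong\mathbb{Z}\times_C\mathbb{Z}=L[\mathbb{Z}^2]$: concretely $(a,b)\mapsto(x,y)$ with $x=(a+b)/2$, $y=(a-b)/2$ (defined on the even sublattice, it is a bijection onto $\mathbb{Z}^2$ sending $K$-adjacency to $C$-adjacency, and fixing the origin). I would then check that under this map the constraints $a\ge 0$ and $b\ge 0$ translate exactly into $x+y\ge 0$ and $x-y\ge 0$, i.e. $-x\le y\le x$, which is precisely the domain $\{x\ge y\ge -x\}$. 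Since restricting an isomorphism of graphs to an induced subgraph on one side yields an induced subgraph on the other, this identifies $(\mathbb{Z}_+\times_K\mathbb{Z}_+)^o$ with $L\{x\ge y\ge -x\}$, preserving the origin. The first, finite, assertion is handled identically by taking $P_k$ (resp. $P_l$) as the induced subgraph of $\mathbb{Z}$ on $\{0,\dots,k-1\}$ (resp. $\{0,\dots,l-1\}$): the constraints $0\le a\le k-1$ and $0\le b\le l-1$ become $0\le x+y\le k-1$ and $0\le x-y\le l-1$.

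The one point requiring care—and the main obstacle—is verifying that the coordinate change $(a,b)\leftrightarrow(x,y)$ really is a graph isomorphism between the even sublattice of $\mathbb{Z}\times_K\mathbb{Z}$ (with $K$-adjacency) and $\mathbb{Z}\times_C\mathbb{Z}$ (with $C$-adjacency), since this is what makes the domain translation legitimate; but this is essentially the content of Theorem \ref{thm:ZXMZ}, so I would either cite it directly or spell out that $(a,b)\sim_K(a\pm1,b\pm1)$ corresponds to $(x,y)$ moving by one of $(\pm1,0)$ or $(0,\pm1)$, which is exactly $C$-adjacency. The remaining work is the routine inequality bookkeeping of the preceding paragraph, best accompanied by a reference to Figure \ref{fig:restricted lattices}.
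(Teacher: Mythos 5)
Your proposal is correct and follows essentially the same route as the paper, which simply says the proof is analogous to that of Theorem \ref{thm:ZXMP5 and Z_+XMZ}: apply Proposition \ref{4prop:induced subgraph} to realize $P_k\times_K P_l$ (resp.\ $\mathbb{Z}_+\times_K\mathbb{Z}_+$) as an induced subgraph of $\mathbb{Z}\times_K\mathbb{Z}$ and then identify the component of the origin inside $(\mathbb{Z}\times_K\mathbb{Z})^o\cong\mathbb{Z}\times_C\mathbb{Z}$. The only difference is that you make explicit the coordinate change $(a,b)\mapsto((a+b)/2,(a-b)/2)$ and the resulting translation of the inequalities, where the paper instead appeals to Figure \ref{fig:restricted lattices}.
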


The proof is similar as above, see also Figure \ref{fig:restricted lattices}.

\subsection{Counting walks}

The number of walks on one-dimensional integer lattice $\mathbb{Z}$
from the origin $0$ to itself is well known. 
We have
\begin{equation}\label{4eqn:walks in Z}
W_{2m}(0;\mathbb{Z})=\binom{2m}{m},
\quad
W_{2m+1}(0;\mathbb{Z})=0,
\quad m=0,1,2,\dots.
\end{equation}
A similar result for $\mathbb{Z}_+=\{0,1,2,\dots\}$ is also well known.
We have
\begin{equation}\label{4eqn:walks in Z+}
W_{2m}(0;\mathbb{Z}_+)=C_m=\frac{1}{m+1}\binom{2m}{m},
\quad
W_{2m+1}(0;\mathbb{Z}_+)=0,
\quad m=0,1,2,\dots,
\end{equation}
where $C_m$ is the renowned Catalan number.

We start with typical restricted lattices.

\begin{example}\label{thm:xgey}
(1) For $L=L\{(x,y)\in\mathbb{Z}^2\,;\, x\ge y \}$ we have
\[
W_{2m}((0,0);L)=C_m\binom{2m}{m}=\frac{1}{m+1}\binom{2m}{m}^2,
\quad m=0,1,2,\dots,
\]
and $W_{2m+1}((0,0);L)=0$.
Indeed, by Theorem \ref{thm:ZXMP5 and Z_+XMZ} we have
$L\cong (\mathbb{Z}_+\times_K \mathbb{Z})^o$,
where the origin $(0,0)$ in $L$
corresponds to $o=(0,0) \in\mathbb{Z}_+\times_K \mathbb{Z}$.
Hence
\[
W_{m}((0,0);L)
=W_m((0,0);\mathbb{Z}_+\times_K \mathbb{Z})
=W_m(0;\mathbb{Z}_+)W_m(0;\mathbb{Z}),
\]
where Theorem \ref{thm:number of walks of Kronecker product} is applied.
Then the result follows from 
\eqref{4eqn:walks in Z} and \eqref{4eqn:walks in Z+}.

(2)	For $L=L\{(x,y)\in\mathbb{Z}^2\,;\, x\ge y\ge -x\}$ we have 
\[
W_{2m}((0,0);L)=C_m^2=\frac{1}{(m+1)^2}\binom{2m}{m}^2,
\quad m=0,1,2,\dots,
\]
and $W_{2m+1}((0,0);L)=0$. 
Indeed, we get the result from Theorem \ref{thm:PmXMPn} 
along with a similar argument as in the previous example.

(3)	For $L[\mathbb{Z}^2]=\mathbb{Z}\times_C \mathbb{Z}$ we have
\begin{equation}\label{04eqn:counting walks in Z^2}
W_{2m}((0,0);\mathbb{Z}\times_C\mathbb{Z})
=\binom{2m}{m}^2,
\qquad m=0,1,2,\dots.
\end{equation}
Indeed, from Theorem \ref{thm:ZXMZ} we see that
$L[\mathbb{Z}^2]=\mathbb{Z}\times_C \mathbb{Z}
\cong (\mathbb{Z}\times_K \mathbb{Z})^o$.
Then we obtain
\begin{align*}
W_{2m}((0,0);\mathbb{Z}\times_C\mathbb{Z})
&=W_{2m}((0,0);\mathbb{Z}\times_K\mathbb{Z}) \\
&=W_{2m}(0;\mathbb{Z})W_{2m}(0;\mathbb{Z})
=\binom{2m}{m}^2,
\end{align*}
as desired.
Formula \eqref{04eqn:counting walks in Z^2} is derived in a different way. 
Applying Proposition \ref{prop:M_m(G_1XCG_2)} to
the Cartesian product $\mathbb{Z}\times_C \mathbb{Z}$, we obtain
\begin{align*}
W_{2m}((0,0);\mathbb{Z}\times_C\mathbb{Z})
&=\sum_{k=0}^m \binom{2m}{2k} W_{2k}(0;\mathbb{Z}) W_{2m-2k}(0;\mathbb{Z}) 
\label{4eqn:in ZXZ(2)} \\
&=\sum_{k=0}^m \binom{2m}{2k} \binom{2k}{k}
\binom{2m-2k}{m-k},
\nonumber
\end{align*}
where $W_{2m+1}(0;\mathbb{Z})=0$ is taken into account.
By comparing with \eqref{04eqn:counting walks in Z^2}
we get the following interesting relation:
\[
\sum_{k=0}^m \binom{2m}{2k} \binom{2k}{k}
\binom{2m-2k}{m-k}
=\binom{2m}{m}^2.
\]
Of course, one may calculate the left-hand side directly
by using the Vandermonde convolution formula for binomial coefficients
to get the right-hand side.
\end{example}

Finally we record the case where $D\subset \mathbb{Z}^2$ is bounded
in one or two directions,
see Theorems \ref{thm:ZXMP5 and Z_+XMZ} and \ref{thm:PmXMPn}.

\begin{example}
(1) For $L=L\{(x,y)\in\mathbb{Z}^2\,;\, x\ge y \ge x-(n-1)\}$ with
$n\ge2$ we have
\[
W_{2m}((0,0);L)
=W_{2m}(0;P_n)W_{2m}(0;\mathbb{Z})
=\binom{2m}{m} W_{2m}(0;P_n),
\qquad
m=0,1,2,\dots.
\]

(2)	For $L=L\{(x,y)\in\mathbb{Z}^2\,;\, 
0\le x+y\le k-1, \,\, 0\le x-y\le l-1\}$ with $k\ge2$ and $l\ge2$,
we have
\[
W_{2m}((0,0);L)=W_{2m}(0;P_k)W_{2m}(0;P_l),
\qquad
m=0,1,2,\dots.
\]
\end{example}

\begin{remark}\label{rem-moments-P_n}
A closed formula for $W_m(0;P_n)$ may be written down.
Set
\[
\lambda_k=2\cos\frac{k\pi}{n+1}\,,
\qquad k=1,2,\dots,n,
\]
which are, in fact, obtained from zeroes of the Chebyshev polynomials 
of the second kind.
We know that $\{\lambda_1,\dots,\lambda_n\}$ constitute 
the spectrum of $P_n$ (\cite[Section 1.4.4]{Brouwer-Haemers2010}).
Then there exist real constants $a_1,\dots,a_n$ such that
\begin{equation}\label{04eqn:in remark 4.12}
W_m(0;P_n)=\sum_{k=1}^n a_k \lambda_k^m,
\qquad m=0,1,2,\dots.
\end{equation}
Then, \eqref{04eqn:in remark 4.12} gives rise to 
a linear system $\bm{b}=\Lambda \bm{a}$.
For $m\le 2n$ we have
\[
W_m(0;P_n)=W_m(0;\mathbb{Z}_+)
=\begin{cases}
C_{m/2}, & \text{if $m$ is even}, \\
0, & \text{otherwise},
\end{cases}
\]
and the Vandermonde matrix $\Lambda$ is
easily inverted, we obtain $a_1,\dots,a_n$ uniquely from
$\bm{a}=\Lambda^{-1}\bm{b}$.
Here is a concrete example:
\[
W_{2m}(0;P_4)=\frac{5-\sqrt5}{10}\bigg(\frac{3+\sqrt5}{2}\bigg)^m
+\frac{5+\sqrt5}{10}\bigg(\frac{3-\sqrt5}{2}\bigg)^m
\]
for $m=0,1,2,\dots$,
and, of course, $W_{2m+1}(0;P_4)=0$.
\end{remark}

\subsection{Spectral distributions}
We will describe spectral distributions 
corresponding to graphs with Kronecker product structures. 
We begin with their building blocks, namely, 
spectral distributions associated to $\mathbb{Z}$, $\mathbb{Z}_+$ and $P_n$.

The \textit{arcsine distribution} with mean 0 and variance 2 
is defined by the density function:
\begin{equation}\label{eqn:arcsine}
\alpha(x)=\frac{1}{\pi\sqrt{4-x^2}}\,1_{(-2,2)}(x),
\qquad x\in\mathbb{R}.
\end{equation}
The \textit{semicircle distribution} with mean 0 and variance 1
is defined by the density function:
\begin{equation}\label{eqn:semicircle}
w(x)=\frac{1}{2\pi}\sqrt{4-x^2}\,1_{[-2,2]}(x),
\qquad x\in\mathbb{R}.
\end{equation}
By elementary calculus we have
\begin{align}
M_{2m}(\alpha)
&=\int_{\mathbb{R}} x^{2m} \alpha(x)\,dx
=\binom{2m}{m} = W_{2m}(0;\mathbb{Z}),
\label{eqn:W_{2m}(0;Z)}\\
M_{2m}(w)
&=\int_{\mathbb{R}} x^{2m} w(x)\,dx
=C_m=\frac{1}{m+1}\binom{2m}{m} 
=W_{2m}(0;\mathbb{Z}_+),
\label{eqn:W_{2m}(0;Z+)}
\end{align}
for $m=0,1,2,\dots$.
We see from Remark \ref{rem-moments-P_n} 
that the spectral distribution $\pi_n$ associated to $P_n$ is given by
\[
\pi_n = \sum_{k=1}^n a_k \delta_{\lambda_k},
\]
where $\delta_x$ is the Dirac measure on the point $x\in \mathbb{R}$.

Now we move to the 2-dimensional cases associated to Cartesian and Kronecker products.

\begin{example}
For the Cartesian product $\mathbb{Z}\times_C\mathbb{Z}$ 
we have
\begin{align*}
W_{m}((0,0);\mathbb{Z}\times_C\mathbb{Z})
&=\sum_{k=0}^m \binom{m}{k} W_{k}(0;\mathbb{Z})W_{m-k}(0;\mathbb{Z}) \\
&=\sum_{k=0}^m \binom{m}{k} M_{k}(\alpha) M_{m-k}(\alpha)
=M_{m}(\alpha*\alpha).
\end{align*}
While, for the Kronecker product we have
\begin{align*}
W_{m}((0,0);\mathbb{Z}\times_K\mathbb{Z})
&=W_{m}(0;\mathbb{Z})W_m(0;\mathbb{Z}) \\
&=M_{m}(\alpha) M_m(\alpha)
=M_{m}(\alpha *_M \alpha).
\end{align*}
Since $\mathbb{Z}\times_C\mathbb{Z}\cong
(\mathbb{Z}\times_K\mathbb{Z})^o$, we have
\begin{equation}\label{04eqn:M_m coincide}
M_{m}(\alpha*\alpha)=M_{m}(\alpha*_M\alpha),
\qquad m=0,1,2,\dots.
\end{equation}
Since $\alpha*\alpha$ (as well as $\alpha*_M \alpha$) has a compact support,
\eqref{04eqn:M_m coincide} is sufficient to claim
that $\alpha*\alpha=\alpha*_M\alpha$.
By similar argument we obtain
the spectral distributions for some restricted lattices.
The following table summarizes the results.
\smallskip
\begin{center}
\renewcommand{\arraystretch}{1.2}
\begin{tabular}{|c|c|c|}\hline
Domain $D$ & $W_{2m}(L[D],O)$ & spectral distribution \\ \hline
$\mathbb{Z}$ & $\binom{2m}{m}$ & $\alpha$ \\
$\mathbb{Z}_+$ & $C_m$ & $w$ \\ \hline
$\mathbb{Z}^2$ & $\binom{2m}{m}^2$  & $\alpha*\alpha=\alpha*_M\alpha$ \\
$\{x\ge y\}$ 
 & $C_m \binom{2m}{m}$  & $w*_M \alpha$ \\
$\{x\ge y\ge-x\}$
 & $C_m^2$  & $w*_M w$ \\
$\{x\ge0, \, y\ge0\}$
 & (A)  & $w* w$ \\
$\{x\ge y \ge x-(n-1)\}$
 & (B)  & $\pi_n*_M\alpha$ \\
$\left\{\begin{array}{l}
0\le x+y\le k-1, \\
0\le x-y\le l-1
\end{array}
\right\}$ & (C) & $\pi_k*_M \pi_l$
\\
\hline
\end{tabular}
\vspace*{10pt}
\end{center}

Concise formulas for (A)--(C) are not known, but we have 
\begin{gather*}
\mathrm{(A)}=\sum_{k=0}^m \binom{2m}{2k} C_kC_{m-k}, 
\qquad
\mathrm{(B)}=W_{2m}(0;P_n)\binom{2m}{m}, \\
\mathrm{(C)}=W_{2m}(0;P_k)W_{2m}(0;P_l).
\end{gather*}

\end{example}

%%%%%%%%%%%%%%%%%%%%%%%%%%%%%%%%%%%%%%%%%%%%%%%%%%%%%%
\subsection{Calculating density functions}
In this section we investigate closed forms of density functions of 
the spectral distributions $\alpha*\alpha$, $w*_M\alpha$ and $w*_M w$.

\begin{example}
(1) It follows from Proposition \ref{Prop:density of Mellin convolution} that
the density function of $w*_M \alpha$ is given by $2w \star \alpha$. Since both $w(x)$ and $\alpha(x)$ are supported by 
the interval $[-2,2]$, we see easily that $w\star \alpha(x)=0$ for $x>4$. 
Then, in terms of the explicit forms \eqref{eqn:arcsine} 
and \eqref{eqn:semicircle}, we have:
\begin{align}\label{eqn:w star alpha}
w\star \alpha(x)
&=\int_0^{\infty} w(y)\alpha\Big(\frac{x}{y}\Big) \frac{dy}{y}\\
&=\frac{1}{2\pi^2} \int_{x/2}^2 \sqrt{4-y^2}\,
\frac{1}{\sqrt{4-(x/y)^2}}\, \frac{dy}{y} 
\nonumber\\
&=\frac{1}{2\pi^2} \int_{x/2}^2 
\sqrt{\frac{4-y^2}{4y^2-x^2}}\,dy,
\qquad 0\le x\le 4.
\nonumber
\end{align}
Here we need elliptic integrals and some relevant formulas \cite{Jeffrey}.
The complete elliptic integrals of the first and second kinds
are defined respectively by
\begin{align*}
K(k)&=\int_0^{\pi/2}\frac{d\theta}{\sqrt{1-k^2\sin^2 \theta}}
=\int_0^1 \frac{dx}{\sqrt{(1-x^2)(1-k^2x^2)}}, \\
E(k)&=\int_0^{\pi/2}\sqrt{1-k^2\sin^2 \theta}\, d\theta
=\int_0^1 \sqrt{\frac{1-k^2x^2}{1-x^2}}\,dx,
\end{align*}
where $k^2<1$.
Using the formula:
\[
\int_b^a \sqrt{\frac{a^2-t^2}{t^2-b^2}}\, dt
=a(K(k)-E(k)),
\quad
0<b<a,
\quad
k=\frac{\sqrt{a^2-b^2}}{a}\,,
\]
\eqref{eqn:w star alpha} becomes
\[
w\star \alpha(x)
=\frac{1}{2\pi^2}\{ K(\xi(x))-E(\xi(x)) \},
\]
where
\[
\xi(x)=\sqrt{1-\frac{x^2}{16}}\,.
\]
Consequently, the density function of $w*_M \alpha$ is given by
\[
\frac{1}{\pi^2}\{ K(\xi(x))-E(\xi(x)) \}1_{[-4,4]}(x),
\quad x\in\mathbb{R}.
\]
\begin{figure}[hbt]
\begin{center} 
\includegraphics[width=200pt,keepaspectratio,clip]{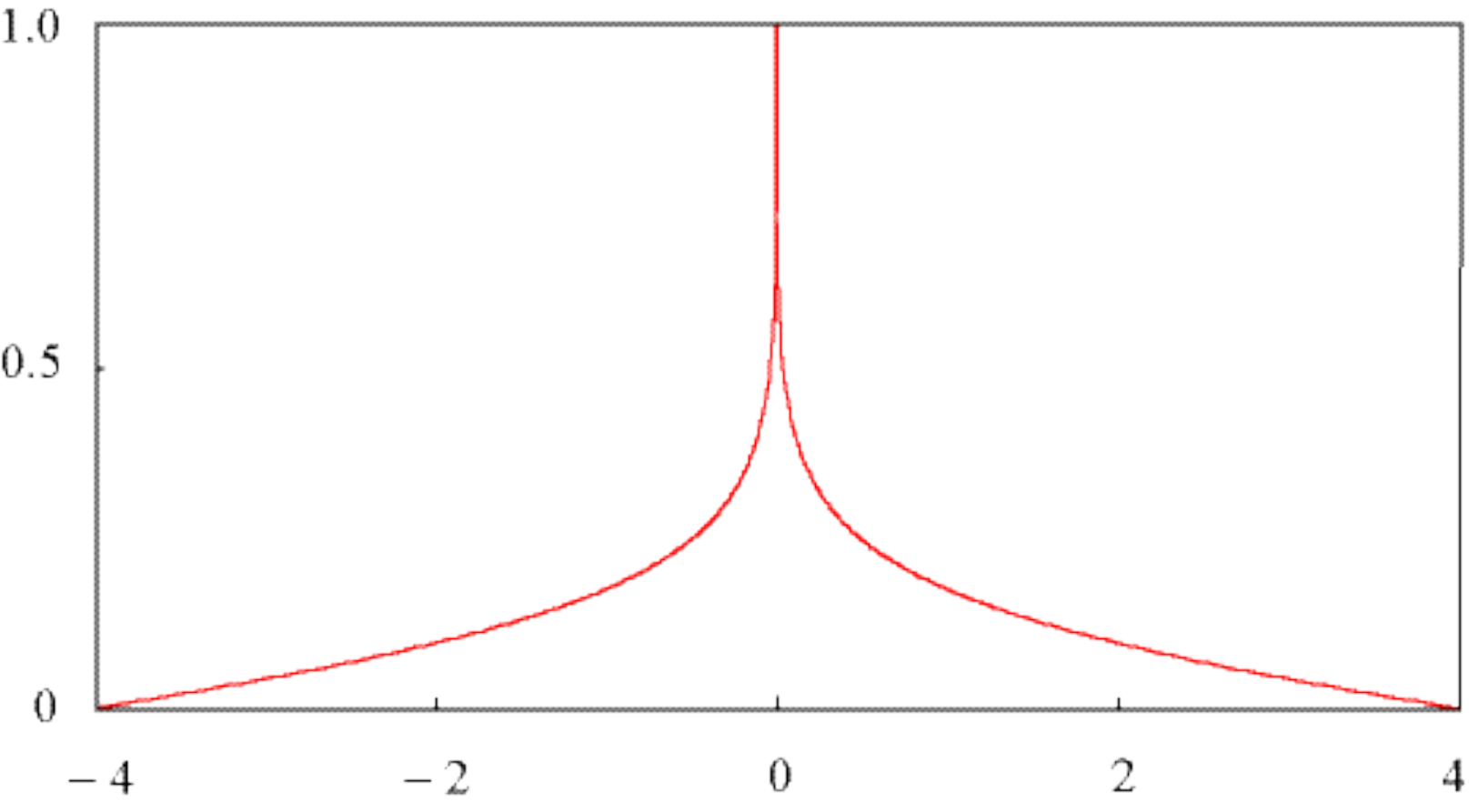}
\caption{The density function of $w*_M\alpha$}
\label{fig:wMa} 
\end{center} 
\end{figure}

(2)	Similarly, the density function of $\alpha*_M \alpha=\alpha*\alpha$ 
is given by
\[
\frac{1}{2\pi^2}\,
K(\xi(x))1_{[-4,4]}(x),
\qquad x\in\mathbb{R},
\]
and the density function of $w*_M w$ by
\[
\frac{2}{\pi^2}\left\{
\left(1+\frac{x^2}{16}\right)K(\xi(x))-2E(\xi(x))\right\}
1_{[-4,4]}(x),
\qquad x\in\mathbb{R}.
\]
\end{example}

\begin{figure}[hbt]
\begin{center} 
\includegraphics[width=200pt,keepaspectratio,clip]{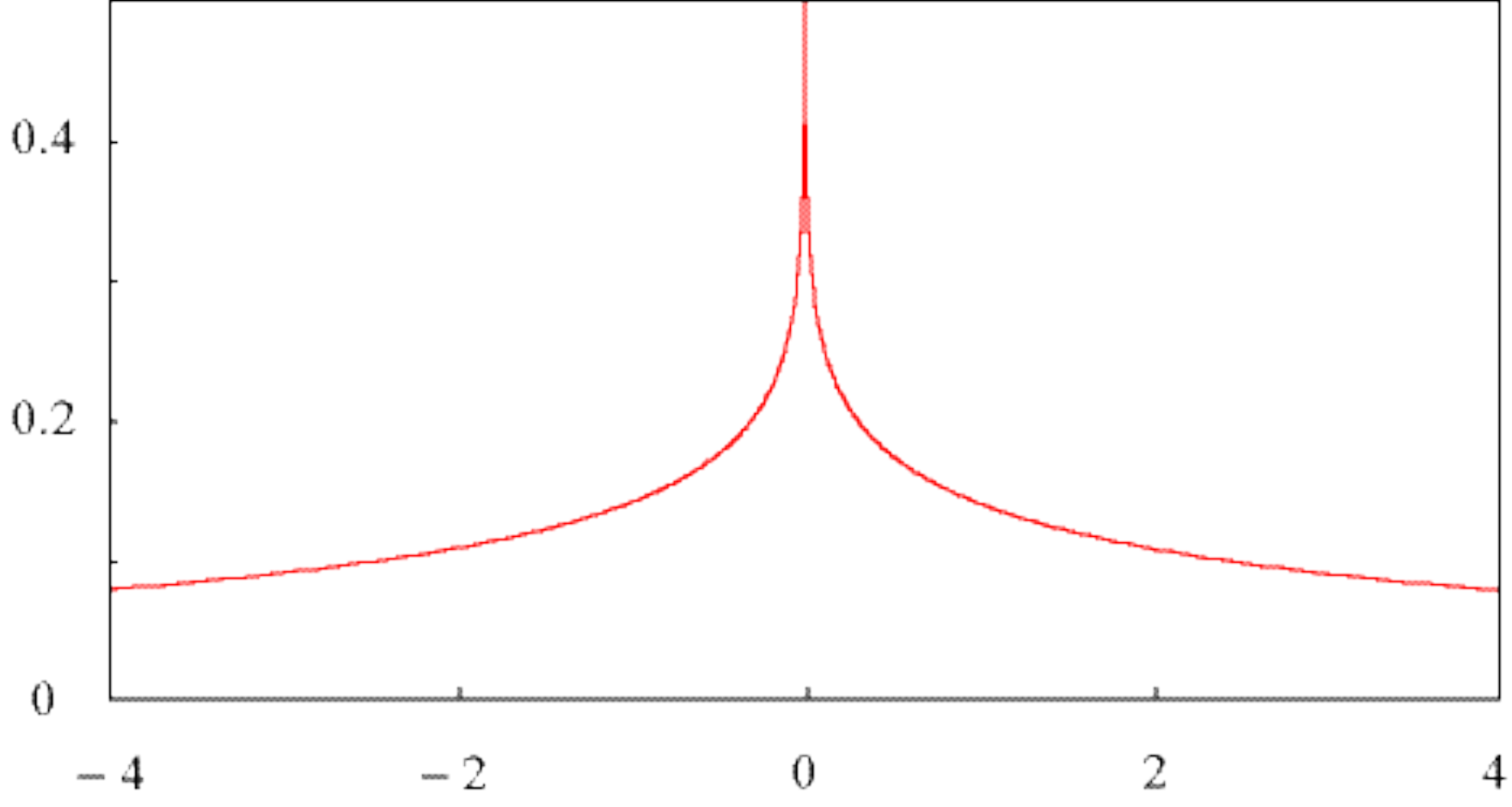}
\caption{The density function of $\alpha*_M\alpha$}
\label{fig:aMa} 
\end{center} 
\end{figure}
\begin{figure}[hbt]
\begin{center} 
\includegraphics[width=200pt,keepaspectratio,clip]{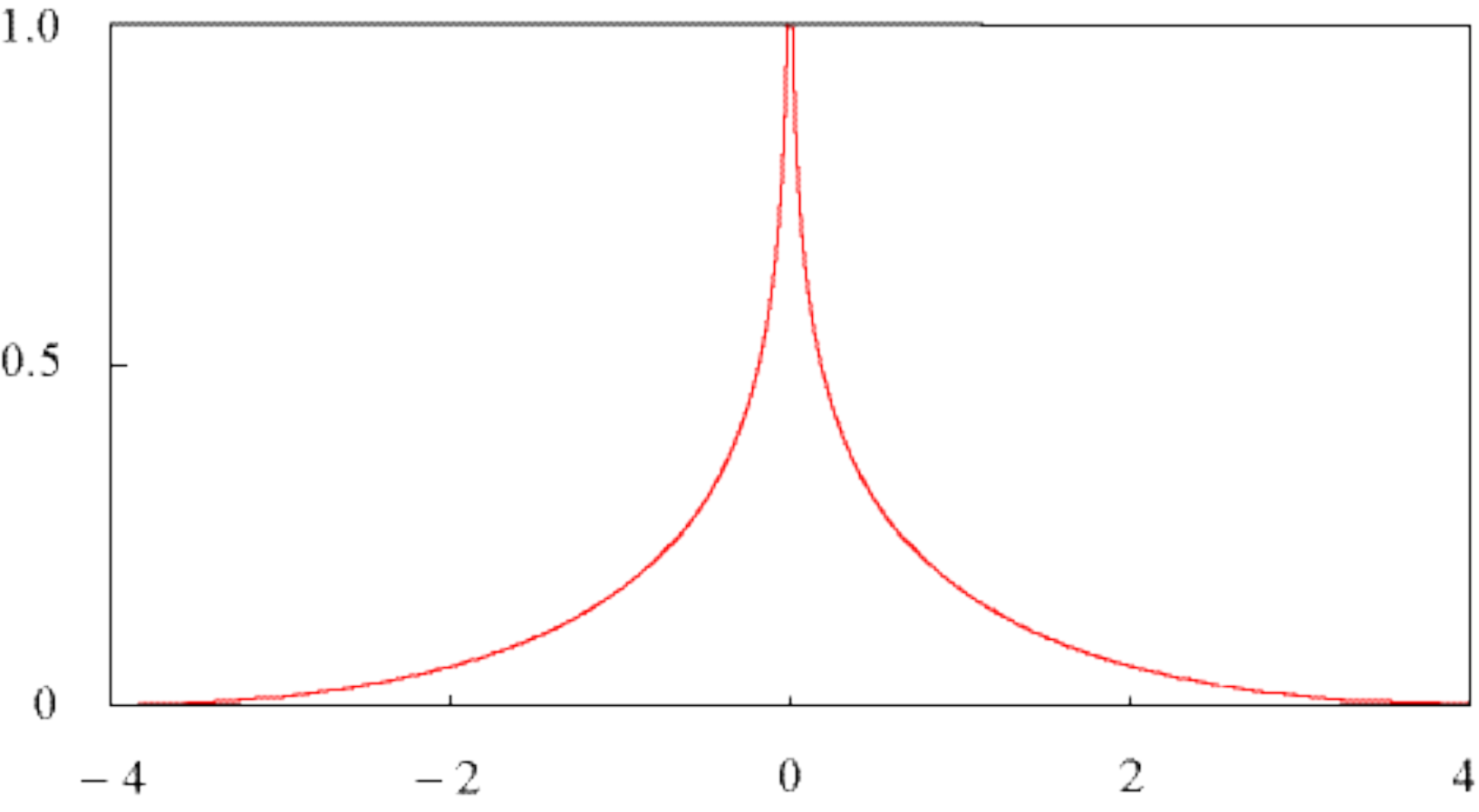}
\caption{The density function of $w*_Mw$}
\label{fig:wMw} 
\end{center} 
\end{figure}
%

%%%%%%%%%%%%%%%%%%%%%%%%%%%%%%%%%%%%%%%%%%%%%%%%%%%%
\section{Examples in higher dimension}
\label{sec:Examples in higher dimension}

In this section we focus on some higher dimensional examples. 
We begin with all possible combinations of products on $\mathbb{Z}^3$, 
namely $\mathbb{Z}\times_K \mathbb{Z}\times_K\mathbb{Z}$ , 
$(\mathbb{Z}\times_K \mathbb{Z})\times_C\mathbb{Z}$, 
$(\mathbb{Z}\times_C \mathbb{Z})\times_K\mathbb{Z}$ 
and $\mathbb{Z}\times_C \mathbb{Z}\times_C\mathbb{Z}$.

\begin{example}\label{5ex:1}
(1) The Kronecker product $\mathbb{Z}\times_K \mathbb{Z}\times_K\mathbb{Z}$ 
has 4 connected components, which are mutually isomorphic.
We have
\[
W_{2m}((0,0,0);\mathbb{Z}\times_K \mathbb{Z}\times_K\mathbb{Z})
=\binom{2m}{m}^3,
\qquad m=0,1,2,\dots.
\]
The connected component containing $O(0,0,0)$, as is illustrated in
Figure \ref{fig:ZXMZXMZ}, is the body-centerd cubic lattice
or a kind of octahedral honeycomb.
For $(\mathbb{Z}\times_C \mathbb{Z})\times_K \mathbb{Z})^o$ we have 
\[
((\mathbb{Z}\times_C \mathbb{Z})\times_K \mathbb{Z})^o
\cong
((\mathbb{Z}\times_K \mathbb{Z})^o\times_K \mathbb{Z})^o
\cong 
(\mathbb{Z}\times_K \mathbb{Z}\times_K \mathbb{Z})^o.
\]
Hence counting walks in 
$(\mathbb{Z}\times_C \mathbb{Z})\times_K \mathbb{Z}$ is 
reduced to the previous one.
\begin{figure}[hbt]
\begin{center} 
\includegraphics[width=320pt,keepaspectratio,clip]{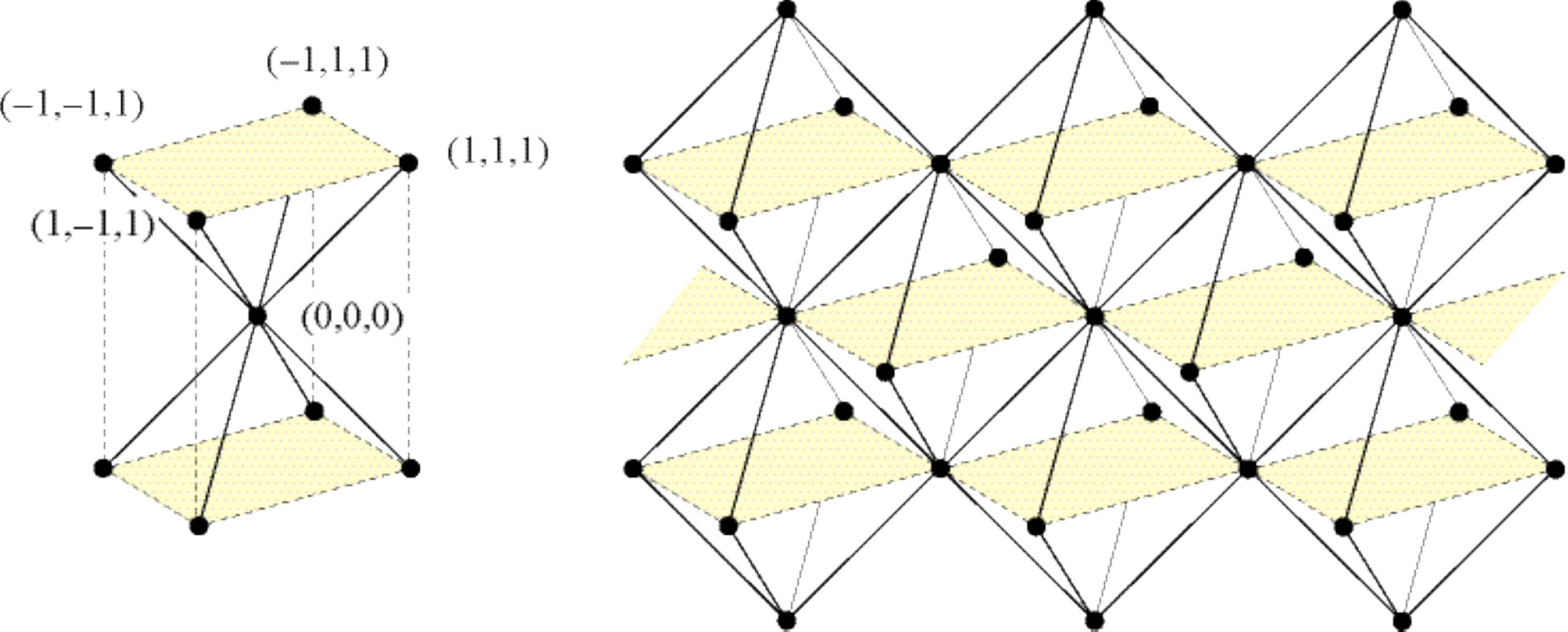}
\caption{$(\mathbb{Z}\times_K \mathbb{Z}\times_K\mathbb{Z})^o$}
\label{fig:ZXMZXMZ} 
\end{center} 
\end{figure}

(2) For other combinations of products of $\mathbb{Z}$ we see that
\[
((\mathbb{Z}\times_K \mathbb{Z})\times_C \mathbb{Z})^o
\cong (\mathbb{Z}\times_K \mathbb{Z})^o\times_C \mathbb{Z}
\cong (\mathbb{Z}\times_C \mathbb{Z})\times_C \mathbb{Z},
\]
which is the usual 3-dimensional integer lattice.
Hence
\begin{align*}
&W_{2m}((0,0,0);(\mathbb{Z}\times_C \mathbb{Z})\times_C\mathbb{Z}) 
=W_{2m}((0,0,0);(\mathbb{Z}\times_K \mathbb{Z})\times_C\mathbb{Z}) \\
&\qquad
=\sum_{k=0}^m \binom{2m}{2k}\binom{2k}{k}^2\binom{2m-2k}{m-k}
=\sum_{k=0}^m \frac{(2m)!(2k)!}{(m-k)!^2 k!^4}.
\end{align*}
Of course the above result is well known,
and our contribution here would be the derivation using the Kronecker product.
\end{example}

The last example is a very interesting case of products on $\mathbb{Z}^3_+$, which is related to a restricted lattice in $\mathbb{Z}^3$.

\begin{example}
The graph $(\mathbb{Z}_+\times_K \mathbb{Z}_+)\times_C \mathbb{Z}_+$
has two connected components
and we consider the connected component 
$((\mathbb{Z}_+\times_K \mathbb{Z}_+)\times_C \mathbb{Z}_+)^o$
containing $O=(0,0,0)$.
Then we have
\begin{align}
&W_{2m}((0,0,0);(\mathbb{Z}_+\times_K \mathbb{Z}_+)\times_C \mathbb{Z}_+)
\label{5eqn:ex 5.2} \\
&\qquad=\sum_{k=0}^m\binom{2m}{2k}
W_{2k}((0,0);\mathbb{Z}_+\times_K \mathbb{Z}_+)
W_{2m-2k}(0;\mathbb{Z}_+) 
\nonumber \\
&\qquad=\sum_{k=0}^m\binom{2m}{2k} C_k^2 C_{m-k} 
\nonumber \\
&\qquad=\sum_{k=0}^m\frac{(2m)!(2k)!}{(m-k)!(m-k+1)! k!^2(k+1)!^2}\,.
\nonumber
\end{align}
It is remarkable that
the last summation has been already obtained in \cite{Wimp-Zeilberger1989}
as the number of walks in the 3-dimensional
restricted lattice $L\{x\ge y\ge z\} = \{ (x,y,z) \in \mathbb{Z}^3 : x\ge y\ge z \}$, namely,
\[
W_{2m}((0,0,0);(\mathbb{Z}_+\times_K \mathbb{Z}_+)\times_C \mathbb{Z}_+)
=W_{2m}((0,0,0);L\{x\ge y\ge z\}),
\]
for all $m=0,1,2,\dots$.
It is, however, noted that 
$((\mathbb{Z}_+\times_K \mathbb{Z}_+)\times_C \mathbb{Z}_+)^o$
and $L\{x\ge y\ge z\}$ are not isomorphic.
For example, in the former graph
there is a unique vertex with degree 2 (that is, $O=(0,0,0)$),
while there are many vertices with degree 2 in the latter.
\end{example}

A similar phenomenon is observed also in the two-dimensional case.

\begin{example}
It follows by the usual reflection argument that
\[
W_{2m}(1;\mathbb{Z}_+)
=\binom{2m}{m}-\binom{2m}{m+2}
=C_{m+1}\,.
\]
On the other hand, it is known \cite{Guy-Krattenthaler-Sagan1992} that
\[
W_{2m}((0,0);\mathbb{Z}\times_C\mathbb{Z}_+)
=\binom{2m}{m}\binom{2m+2}{m}
 -\binom{2m+2}{m+1}\binom{2m}{m-1}
=C_mC_{m+1}\,.
\]
Therefore,
\[
W_{2m}((0,0);\mathbb{Z}\times_C\mathbb{Z}_+)
=W_{2m}((0,1);\mathbb{Z}_+\times_K\mathbb{Z}_+),
\]
though two graphs $\mathbb{Z}\times_C\mathbb{Z}_+$
and $\mathbb{Z}_+\times_K\mathbb{Z}_+$ are not isomorphic.
\end{example}

\end{document}